\newcommand{\pref}{\prettyref}
\setlist[itemize]{topsep=0.3em,leftmargin=1.9em}
\setlist[enumerate]{topsep=0.2em,leftmargin=2.0em}
\newcommand\Item[1][]{%
  \ifx\relax#1\relax  \item \else \item[#1] \fi
  \abovedisplayskip=0pt\abovedisplayshortskip=0pt~\vspace*{-\baselineskip}}
\newcommand{\NN}{\mathbb{N}} 
\newcommand{\ZZ}{\mathbb{Z}} 
\newcommand{\QQ}{\mathbb{Q}} 
\newcommand{\RR}{\mathbb{R}} 
\newcommand{\CC}{\mathbb{C}} 
\newcommand{\TT}{\mathbb{T}} 
\newcommand{\DD}{\mathbb{D}} 
\newcommand{\HH}{\mathbb{H}} 
\newcommand{\shortminus}{\scalebox{0.7}[1.0]{\( - \)}}
\providecommand{\keywords}[1]{\noindent\small\textbf{\textit{Keywords---}} #1} 
\theoremstyle{plain}
\newtheorem{theorem}{Theorem}[section]
\newtheorem{lemma}[theorem]{Lemma}
\newtheorem{proposition}[theorem]{Proposition}
\providecommand{\customgenericname}{}
\newcommand{\newcustomtheorem}[2]{%
  \newenvironment{#1}[1]
  {%
   \renewcommand\customgenericname{#2}%
   \renewcommand\theinnercustomgeneric{##1}%
   \innercustomgeneric
  }
  {\endinnercustomgeneric}
}
\theoremstyle{definition}
\newtheorem{definition}[theorem]{Definition}
\newtheorem*{definition*}{Definition}
\newtheorem{remark}[theorem]{Remark}
\newtheorem*{remark*}{Remark}
\newtheorem*{observation*}{Observation}
\numberwithin{equation}{section}
    \def\@algocf@capt@plainruled{above}
    \renewcommand{\algocf@caption@plainruled}{%
    \vskip\AlCapSkip%
    \box\algocf@capbox%
    \vskip 8\algoheightrule}
    \titleformat{\section}{\Large\bfseries}{\thesection}{0.5em}{}
\begin{document}
\pagenumbering{arabic}
\title{\vspace{-5.0mm}\LARGE\scshape On the parameter space of fibered \\ hyperbolic polynomials \vspace{-2.00mm}}
\author{Robert Florido\thanks{Corresponding author: \href{mailto:robert.florido@ub.edu}{\textit{robert.florido@ub.edu}}} $^1$, Núria Fagella\thanks{This work is supported by the grants PID2023-147252NB-I00 and CEX2020-001084-M (Mar\'ia de Maeztu Excellence program) from the Spanish State Research Agency, and ICREA Acad\`emia 2020 from the Catalan government.} $^{1,2}$}
\date{\vspace{-1.5mm}
{\small $^1$Departament de Matem\`atiques i Inform\`atica, Universitat de Barcelona, Barcelona, Spain}\\%
{\small $^2$Centre de Recerca Matem\`atica, Barcelona, Spain}\\[2ex]%
December 18, 2024} 
\maketitle
\begin{abstract}
We present an application of quasiconformal (QC) surgery for holomorphic maps fibered over an irrational rotation of the unit circle, also known as \textit{quasiperiodically forced (QPF) maps}. It consists of modifying the fibered multiplier of an attracting invariant curve for a QPF hyperbolic polynomial. This is the analogue of the classical change of multiplier of an attracting cycle in the one-dimensional iteration case, to parametrize hyperbolic components of the Mandelbrot set.

Our goal is to show that, for a family of QPF quadratic maps with Diophantine frequency, the \textit{fibered multiplier map} associated to its unique attracting invariant curve, as a complex counterpart of the Lyapunov exponent, is a holomorphic submersion on a complex Banach manifold.
\end{abstract}
\keywords{Fibered holomorphic dynamics; Quasiconformal surgery; Attracting invariant curves; Fibered multiplier map; Hyperbolic polynomials; Holomorphic submersions.} 

%
\renewcommand{\contentsname}{
    \large 
    Contents
}
\titleformat*{\section}{\Large\bfseries} 


\section{Introduction}
\label{sec:1_Intro}

Quasiperiodically forced (QPF) systems have been of special interest to modern research in nonlinear dynamics, particularly due to the emergence of strange non-chaotic attractors; see \cite{Feudel2006,Jager2009} for comprehensive surveys in the real setting. They naturally arise from the \textit{stroboscopic map} (a Poincaré map but using a section in time) for flows driven by a quasiperiodic forcing which depends on several incommensurate frequencies.

Here we study the complexification of QPF maps as \textit{fibered holomorphic maps} over an irrational rotation on the $1$-torus $\TT^1:=\RR/\ZZ$, in the following sense.

\begin{definition}[QPF maps]
    \label{def:QPFmap}
    We say that a continuous function $F:\TT^1\times\CC\to\TT^1\times\CC$ is a \textit{QPF (holomorphic) map}, with \textit{frequency} $\alpha\notin\QQ$, if it is a skew-product of the form
    \begin{equation}
        \label{eq:QPFsystem}
        F(\theta,z) = \big(\theta+\alpha,f_\theta(z)\big)
    \end{equation}
    such that $\mathcal{R}_\alpha:=\Pi_1\circ F$ is an irrational rotation of angle $\alpha$, and the \textit{fiber maps} $f_\theta:=\Pi_2\circ F$ are holomorphic for all $\theta\in\TT^1$, where $\Pi_j$ stands for the projection map to the $j$-th coordinate, $j\in\{1,2\}$.    
\end{definition}

We are interested in the long-term behavior of points under iteration of $F$, as a fibered holomorphic map over $\mathcal{R}_\alpha$ on $\TT^1$. Denote by $F^n:=(\mathcal{R}_{n\alpha},f_\theta^n)$ its $n$-th iterate, that is, given an initial point $(\theta,z)\in\TT^1\times\CC$,
\begin{equation}
    F^n(\theta,z) = \big(\theta+n\alpha, f_\theta^n(z)\big), \qquad \mbox{ where } \qquad f_\theta^n := f_{\theta+(n-1)\alpha} \circ \cdots \circ f_\theta
\end{equation}
is a forward composition of holomorphic maps. For ease of notation, we set $\CC_\theta:=\{\theta\}\times\CC$ so $F^n(\CC_\theta)\subset\CC_{\theta+n\alpha}$, and for any $\mathcal{U}\subsetneq\TT^1\times\CC$, we denote by $\mathcal{U}_\theta:=\Pi_2(\mathcal{U}\cap\CC_\theta)$ the \textit{fiber} of $\mathcal{U}$ at $\theta\in\TT^1$.

It is clear that the fibered map $F$ has no fixed or periodic points due to the minimality of the irrational rotation on the base, hence the simplest invariant objects under $F$ shall be defined as follows.

\begin{definition}[Invariant curves]
    Let $F=(\mathcal{R}_\alpha,f_\theta)$ be a QPF map. A continuous function $\gamma:\TT^1\to\CC$ is called an \textit{invariant curve} of $F$ if, for each $\theta\in\TT^1$, it satisfies
    \begin{equation}
        F(\theta,\gamma(\theta)) = \left(\theta+\alpha, \gamma(\theta+\alpha)\right), \qquad \mbox{ i.e. } \qquad f_\theta(\gamma(\theta)) = \gamma(\theta+\alpha).
    \end{equation}
\end{definition}
    
We may also say that $\gamma$ is a \textit{$p$-periodic curve} of $F$ if it is invariant under $F^p$, i.e. its \textit{graph}
\begin{equation}
    \left\{ (\theta,\gamma(\theta)): \theta\in\TT^1 \right\} 
\end{equation}
is forward invariant by $F^p$, where $p\geq 1$ is the smallest natural with this property. As the class of QPF holomorphic maps is closed under composition, it is enough to study the dynamics about invariant curves. 

Ponce \cite{Ponce2007phd} described the local dynamics of $F$ on average in a neighborhood of an invariant curve $\gamma$ assuming that $F$ is injective on it, that is, the differential $f_\theta':=\partial_z f_\theta$ does not vanish there. This is accomplished in terms of the sign of the \textit{Lyapunov exponent} of $\gamma$, defined as usual as the real number
\begin{equation}
    \Lambda(\gamma):= \int_{\TT^1} \mathrm{log}\left| f_\theta'(\gamma(\theta)) \right| d\theta,
\end{equation}
which averages the exponential rate of separation under iteration of initial points infinitesimally close to $\gamma$.

Here we consider the complex version of the Lyapunov exponent so that its imaginary part measures the average speed of rotation about $\gamma$ of nearby points under iteration (see \cite{Herman1983, Ponce2007phd}), taking into account the winding number of the curve $\theta\mapsto f_\theta'(\gamma(\theta))$ with respect to the origin. Furthermore, as for any sufficiently small constant displacement $h\in\CC$ with respect to the invariant curve $\gamma$,
\begin{equation}
    f_\theta^{}\left(\gamma(\theta)+h\right) - \gamma(\theta+\alpha) = f_\theta'\left(\gamma(\theta)\right) h + \mathcal{O}(|h|^2),
\end{equation}
we see that the rate of rotation and stretching close to $\gamma$, as well as the number of times that the image under $F$ of the nearby closed curve $\gamma+h$ winds around $\gamma$, is mainly given by the linear part of our QPF system.

In this manner, we can define a natural counterpart to the multiplier of a fixed point in the autonomous case as follows (cf. \cite[\S1.3]{Ponce2007phd}). Recall that the \textit{winding number} of a closed curve $\Gamma:\TT^1\to\CC$ about $z_0\notin \Gamma(\TT^1)$, denoted by $\mathrm{wind}(\Gamma,z_0)$, represents the number of turns that $\Gamma$ does counterclockwise around $z_0$.

\begin{definition}[Index and fibered multiplier]
    \label{def:IndexMultiplier}
    Let $F$ be a QPF map with an invariant curve $\gamma$, and $f_\theta$ its fiber map such that $f_\theta'(\gamma(\theta))\neq 0$ for all $\theta\in\TT^1$. The \textit{index} of $\gamma$ is the integer $m(\gamma):=\mathrm{wind}(f_\theta'(\gamma(\theta)),0)$, and
    \begin{equation}
        \kappa(\gamma):= \exp\left[ \int_{\TT^1} \mathrm{log}\left(e^{-2\pi i \hspace{0.2mm} m(\gamma)\theta} f_\theta'\left(\gamma(\theta)\right) \right) d\theta \right]\in\CC^*
    \end{equation}
    is the \textit{fibered multiplier} of $\gamma$, for any branch of the logarithm. The quantities $\Lambda(\gamma)\in\RR$ and $\rho(\gamma)\in\TT^1$ so that
    \begin{equation}
        \kappa(\gamma)=e^{\Lambda(\gamma)+2\pi i \rho(\gamma)},
    \end{equation}
    are known as the \textit{Lyapunov exponent} and the \textit{fibered rotation number} of $\gamma$, respectively.
\end{definition}

In this setting, the invariant curve $\gamma$ is said to be \textit{attracting}, \textit{indifferent}, or \textit{repelling} if $|\kappa(\gamma)|$ is less than, equal to, or greater than $1$, respectively. Fibered analogues of the main theorems on the local dynamics near fixed points in the autonomous case (see e.g. \cite{Milnor2006}) can be found in \cite{Ponce2007} via the following notion of conjugacy.

\begin{definition}[Fibered conjugacies]
    Let $\mathcal{U}$ and $\mathcal{V}$ be domains in $\TT^1\times\CC$. We say that two QPF maps $F:\mathcal{U}\to \mathcal{U}$ and $G:\mathcal{V}\to \mathcal{V}$ are \textit{topologically conjugate up to angle translation} $\nu\in\TT^1$ if there is a continuous map $H:\mathcal{U}\to \mathcal{V}$ of the form $H(\theta,z)=(\theta+\nu,h_\theta(z))$, where $h_\theta$ is a homeomorphism for all $\theta\in\TT^1$, such that
    \begin{equation}
        G \circ H = H \circ F.
    \end{equation}
    If $\nu=0$ we just say that $F$ and $G$ are \textit{conjugate} via $H$. Moreover, if $h_\theta$ can be chosen to be quasiconformal (resp. affine), $F$ is said to be \textit{quasiconformally (resp. affinely) conjugate} to $G$.
\end{definition}

We shall also use the terms \textit{fibered quasiconformal (resp. affine) map} for $H$ in relation to its fiber maps. These conjugacies correspond to continuous changes of coordinates (up to a shift in the angular variable) between QPF maps which share the same qualitative dynamics, relating their iterates as in \pref{fig:Conjugacy}. The degree of regularity of the conjugacy map $h_\theta$ determines the strength of the similarity between such dynamical systems. It is known that the Lyapunov exponent of $\gamma$ is preserved under conformal conjugacies, but its fibered rotation number does not in general, unless we consider appropriate conjugacies (see \pref{lem:InvMultiplier}). 

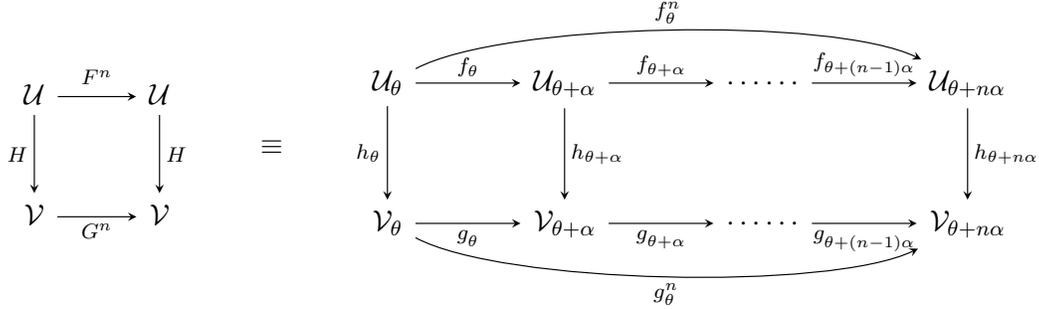
\begin{figure}[H]
    \centering
    \begin{tikzcd}[column sep=30pt,row sep=30pt]
    \mathcal{U} \arrow[r,"F^n_{}"] \arrow[d,"H"'] &
    \mathcal{U} \arrow[d,"H"]
    \\
    \mathcal{V} \arrow[r,"G^n_{}"'] &
    \mathcal{V}
    \end{tikzcd}
    $\qquad \equiv \qquad$ 
    \begin{tikzcd}[column sep=40pt,row sep=35pt]
    \mathcal{U}_\theta \arrow[r,"f_\theta"] \arrow[d,"h_\theta"']
    \arrow[bend left=27, looseness=0.5]{rrr}{f_\theta^n} &
    \mathcal{U}_{\theta+\alpha} \arrow[r,"f_{\theta+\alpha}"] \arrow[d,"h_{\theta+\alpha}"] &
    \cdots\cdots \arrow[r,"f_{\theta+(n-1)\alpha}"] &
    \mathcal{U}_{\theta+n\alpha} \arrow[d,"h_{\theta+n\alpha}"]
    \\
    \mathcal{V}_{\theta} \arrow[r,"g_{\theta}^{}"'] \arrow[bend right=27, looseness=0.5, swap]{rrr}{g_{\theta}^n} &
    \mathcal{V}_{\theta+\alpha} \arrow[r,"g_{\theta+\alpha}^{}"'] &
    \cdots\cdots \arrow[r,"g_{\theta+(n-1)\alpha}^{}"'] &
    \mathcal{V}_{\theta+n\alpha}
    \end{tikzcd}
    \caption{Commutative diagram on the conjugacy between the $n$-th iterates of two QPF maps $F$ and $G$ (with frequency $\alpha$) via the fibered map $H:=(\mathrm{Id},h_\theta)$, and its fiber-by-fiber version with $\mathcal{U}_\theta=\Pi_2(\mathcal{U}\cap \CC_\theta)$ and $\mathcal{V}_\theta=\Pi_2(\mathcal{V}\cap \CC_\theta)$, $\theta\in\TT^1$.}
    \label{fig:Conjugacy}
\end{figure}

In this article we aim to generalize the  \textit{soft surgery} procedure of changing the multiplier of an attracting cycle in the autonomous case, introduced by Sullivan in 1981. That is, we are going to modify the fibered multiplier of an attracting invariant curve $\gamma$, by deforming a collection of almost complex structures in an invariant way via successive changes of coordinates, starting from a linear normal form (see \pref{subsec:2_1_Konigs} and \pref{subsec:2_3_QC_MRMT}). Our construction can be locally defined on the connected component of the \textit{basin of attraction} of $\gamma$,
\begin{equation}
    \label{eq:basinAttraction}
    \mathcal{A}(\gamma) := \left\{ (\theta,z)\in\TT^1\times\CC: \ \left|f_{\theta}^n(z) - \gamma(\theta+n\alpha)\right|\to 0 \quad \mbox{ as } \quad n\to\infty  \right\}, 
\end{equation} 
which contains the invariant curve itself. A key ingredient to obtain a global quasiconformal (QC) map is the zero Lebesgue measure of the fractal boundary of $\mathcal{A}(\gamma)$, which motivates the restriction to QPF hyperbolic polynomials in the sense of Sester \cite{Sester1999}, as the central class of study in our work (see \pref{subsec:2_2_QPFpolyn} - \ref{subsec:2_3_QC_MRMT} for definitions).

\begin{customthm}{A}[Changing the fibered multiplier]
    \label{thm:A_QCqpf}
    Let $P$ be a QPF hyperbolic polynomial of Diophantine frequency $\alpha$, with an attracting invariant curve $\gamma$ of fibered multiplier $\kappa_0\in\DD^*$. Then, for any $\kappa\in\DD^*$, there is a QPF hyperbolic polynomial $P_\kappa$ and a fibered QC map $\Phi_\kappa:=(\mathrm{Id},\phi_{\kappa,\theta})$, appropriately normalized so that
    \begin{enumerate}
        \item[1.] $\gamma_\kappa(\theta):=\phi_{\kappa,\theta}^{}(\gamma(\theta))$ is an attracting invariant curve of $P_\kappa$ with fibered multiplier $\kappa$;
        \item[2.] $P$ and $P_\kappa$ are globally QC conjugate via $\Phi_\kappa$, and $\Phi_\kappa$ is conformal off the basin of attraction of $\gamma$.
    \end{enumerate}
    Moreover, for any simply-connected neighborhood $U_0$ of $\kappa_0$ in $\DD^*$, we have that
    \begin{enumerate}
        \item[3.] For each $z\in\CC$, $\phi_{\kappa,\theta}^{}(z)$ is continuous in $\theta\in\TT^1$ and holomorphic in $\kappa\in U_0$; 
        \item[4.] The fiber maps $p_{\kappa,\theta}^{}:=\phi_{\kappa,\theta+\alpha}^{} \circ p_{\theta}^{} \circ \phi_{\kappa,\theta}^{}$ depend holomorphically on $\kappa\in U_0$.
    \end{enumerate}
\end{customthm}

The QC surgery construction involved in the proof of this result (see \pref{sec:3_ProofThmA}) relies on applying fiberwise a version of the Measurable Riemann Mapping Theorem (MRMT) depending on parameters; see \pref{subsec:2_3_QC_MRMT}, especially \pref{thm:MRMTfibers}. The resulting integrating maps, globally defined in our hyperbolic polynomial setting (see also \pref{rem:localQCconj}), depend continuously on the fiber $\theta$ (as a real parameter) and analytically on the parameter $\kappa$.

In fact, our QC procedure will provide further information on the fibered multiplier for quadratic families of QPF hyperbolic polynomials. We can reduce our analysis (see \pref{sec:4_App}) to QPF quadratic maps of the form
\begin{equation}
    \label{eq:QuadraticQlambdaINTRO}
    Q_\lambda(\theta,z) := \left( \theta+\alpha, \ z^2 + \lambda(\theta) z \right), \qquad \mbox{where} \qquad \lambda\in\mathcal{C}^*(\TT^1).
\end{equation} 
Here $\mathcal{C}^*(\TT^1)$ denotes the space of non-vanishing continuous functions on $\TT^1$, as an open subset of the infinite-dimensional Banach space $\left(\mathcal{C}(\TT^1), \lVert \cdot \rVert_\infty\right)$ of continuous loops in $\CC$, with the sup-norm $||\lambda||_\infty:=\sup_{\theta\in\TT^1} |\lambda(\theta)|$.

Following Sester \cite[\S5]{Sester1999} we denote by $\mathcal{M}(\TT^1)\subset\mathcal{C}(\TT^1)$ the connectedness locus of the family of QPF quadratic polynomials, parametrized as in \pref{eq:QuadraticQlambdaINTRO}, and by $\mathcal{M}_0(\TT^1)\subset\mathcal{M}(\TT^1)$ the open subset of hyperbolic parameters generalizing the main cardioid of the Mandelbrot set (see \pref{lem:Qfamily}). For our purposes, we let $\mathcal{M}_0^*(\TT^1):=\mathcal{M}_0(\TT^1)\cap \mathcal{C}^*(\TT^1)$, and focus on
\begin{equation}
    \label{eq:MandelbrotQ0}
    \mathcal{H}_0^*(\TT^1) := \left\{ \lambda\in\mathcal{M}_0^*(\TT^1): \ \int_{\TT^1} \log|\lambda(\theta)|d\theta < 0, \quad \mbox{and} \quad \mathrm{wind}(\lambda(\theta),0)=0 \right\},
\end{equation}
which is indeed associated to those QPF hyperbolic quadratic maps $Q_\lambda$ with a unique attracting invariant curve, $\gamma\equiv 0$, of zero-index, whose basin of attraction is a connected open set in $\TT^1\times\CC$. Note that it is a complex Banach manifold since $\mathcal{M}_0(\TT^1)$ is known to be open in $\mathcal{C}(\TT^1)$, and both the Lyapunov exponent and the winding number are continuous maps when restricted to the space $\mathcal{C}^*(\TT^1)$.

The change of the fibered multiplier $\kappa_0\in\DD^*$ of $\gamma$ is achieved by means of \pref{thm:A_QCqpf} which, together with an appropriate normalization of the integrating maps, enables to track the relation between hyperbolic parameters for this QPF family in terms of the targeted fibered multiplier $\kappa\in U_0$. In the non-fibered case, this type of QC surgery reveals that the multiplier map, associated to an attracting cycle of the standard quadratic polynomial, is a conformal isomorphism from a hyperbolic component of the Mandelbrot set to $\DD$.

Here we show that the \textit{fibered multiplier map} $\widehat{\kappa}:\mathcal{H}_0^*(\TT^1)\to\DD^*$, defined as 
\begin{equation}
    \label{eq:MuliplierMapQ}
    \widehat{\kappa}(\lambda):= \exp\left( \int_{\TT^1} \mathrm{log} \hspace{0.3mm} \lambda(\theta) \hspace{0.3mm} d\theta \right)
\end{equation}
and associated to the unique attracting invariant curve of $Q_\lambda$ in the zero-index case, is a holomorphic submersion in the context of complex Banach manifolds (see \pref{subsec:2_4_Banach}). Sometimes it may be convenient to consider the mapping $\widehat{\chi}$ defined from $\mathcal{H}_0^*(\TT^1)$ onto the left half-plane $\HH_\ell$ such that $\widehat{\kappa} = \exp \circ \widehat{\chi}$, assigning to each parameter $\lambda$ the complex version of the Lyapunov exponent of $\gamma$.

\begin{customthm}{B}[Fibered multiplier map -- quadratic family]
    \label{thm:B_QCapp}
    Let $\mathcal{Q}_\lambda$ be the family of QPF quadratic polynomials 
    \begin{equation}
        Q_\lambda(\theta,z) = \big(\theta+\alpha, \ z^2 + \lambda(\theta) z \big), 
    \end{equation}
    with Diophantine frequency $\alpha$. Then the fibered multiplier map $\widehat{\kappa}:\mathcal{H}_0^*(\TT^1)\to\DD^*$ is a holomorphic submersion.
\end{customthm}

This provides a holomorphic foliation of $\mathcal{H}_0^*(\TT^1)$ of codimension $1$, i.e. a partition of it into submanifolds of constant dimension, whose leaves, the level sets $\{\widehat{\kappa}=\kappa_0\}$, are holomorphic graphs over $\DD^*$; see \pref{thm:HoloSubmersions}. These foliations, or more generally \textit{laminations} (i.e. foliations of closed subsets of complex Banach manifolds), play a central role in complex dynamics as, e.g., holomorphic motions are codimension-one laminations (see more in \cite{Lyubich2024}). Other parameter regions may be explored under some additional regularity on $\lambda(\theta)$.

We believe that the parameter space of $\mathcal{Q}_\lambda$ and the foliation by the fibered  multiplier map $\widehat{\kappa}$ are worth further investigation in our QPF holomorphic setting, which is, however, out of the scope of this paper.

\noindent\textbf{Notation.} \ For a set $A\subset X\in\{ \CC,\TT^1\times\CC \}$, we denote by $\mathrm{int\hspace{0.3mm}} A$, $\overline{A}$, $\partial A$ and $A^c:=X\backslash A$, its interior, closure, boundary and complement in $X$, resp., and for a function $F:X\to X$, we set $F^{-1}(A):=\{ x\in X: F(x)\in A \}$. Given the Euclidean (product) metric on $X$ and $\varepsilon > 0$, denote the \textit{$\varepsilon$-neighborhood} (or \textit{$\varepsilon$-fattening}) of $A$ by
\begin{equation}
    A^\varepsilon := \big\{ x\in X: \ \operatorname{dist}(x,A):=\inf_{a\in A} \operatorname{dist}(x,a) <\varepsilon \big\}.
\end{equation}
In particular, denote by $T^\varepsilon(\theta_0,z_0):=\{\theta_0\}^\varepsilon \times \{z_0\}^\varepsilon = (\theta_0-\varepsilon,\theta_0+\varepsilon)\times B_\varepsilon(z_0)$ the \textit{open $\varepsilon$-tube} about $(\theta_0,z_0)\in\TT^1\times\CC$, where $B_\varepsilon(z_0):=\{ z: |z-z_0|<\varepsilon \}$ is the open ball of radius $\varepsilon$ centered at $z_0$; let $\mathbb{D}:=B_1(0)$. 

\noindent Recall that $\CC_\theta:=\{\theta\}\times\CC$, and for $\mathcal{A}\subset\TT^1\times\CC$, we write $\mathcal{A}=\bigsqcup_{\theta\in\TT^1} \{\theta\}\times\mathcal{A}_\theta$, where $\mathcal{A}_\theta:=\Pi_2(\mathcal{A}\cap\CC_\theta)$ is the \textit{fiber} of $\mathcal{A}$ at $\theta$. For a fibered map $F:\TT^1\times\CC\to\TT^1\times\CC$, we use the lowercase letter $f_\theta$ for its fiber map, and denote by $\mathrm{Crit}(F)$ the \textit{critical set} of $F$, i.e. the points in its domain where the derivative $f_\theta'$ vanishes. By abuse of notation, a curve $\gamma$ will refer both to the continuous function $\gamma:\TT^1\to\CC$ and its graph in $\TT^1\times\CC$, while $F(\gamma)$ will indicate the image curve $\widetilde{\gamma}:\TT^1\to\CC$ under the fibered map $F$, given by $\widetilde{\gamma}(\theta):=f_\theta(\gamma(\theta))$.

\noindent\textbf{Acknowledgments.} \ We are greatful to Gustavo R. Ferreira for valuable comments and insights on the theory of Banach manifolds. We also thank Igsyl Dom\'inguez and Mario Ponce for interesting discussions.

\section{Preliminaries} 
\label{sec:2_Pre}

The local dynamics of QPF holomorphic maps around an invariant curve $\gamma$ is determined to a large extent by the fibered multiplier $\kappa(\gamma)=e^{\Lambda(\gamma)+2\pi i \rho(\gamma)}$ in the sense of \pref{def:IndexMultiplier}. It is well-known that its Lyapunov exponent $\Lambda(\gamma)$ is preserved by fibered conformal conjugacies (i.e. one-to-one analytic on each fiber), however this is not always the case for the fibered rotation number $\rho(\gamma)$ as remarked in \cite[\S1.3]{Ponce2007phd}. We first prove the following lemma on this matter that capitalizes on the notion of fibered conjugacy in our QPF setting. 

\begin{lemma}[Invariance of the fibered multiplier]
    \label{lem:InvMultiplier}
    Let $F$ be a QPF map with an invariant curve $\gamma$ of index $m(\gamma)$ and fibered multiplier $\kappa(\gamma)$. Consider any fibered conformal map $H(\theta,z)=(\theta+\nu,h_\theta(z))$, $\nu\in\TT^1$, and let $\eta:=\mathrm{wind}(h_\theta'(\gamma(\theta)),0)$. Denote by $\widetilde{\gamma}$ the corresponding invariant curve of $\widetilde{F}:=H\circ F\circ H^{-1}$. Then
    \begin{enumerate}
        \item $m(\widetilde{\gamma})=m(\gamma)$;
        \item $\Lambda(\widetilde{\gamma})=\Lambda(\gamma)$; and
        \item $\rho(\widetilde{\gamma})=\rho(\gamma) + \eta\alpha - m(\gamma)\nu \quad (\mathrm{mod} \ 1)$.
    \end{enumerate}
\end{lemma}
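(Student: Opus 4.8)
The plan is to reduce everything to a single computation of the fiberwise derivative of the conjugated map along the conjugated curve, and then exploit multiplicativity of winding numbers together with translation‑invariance of Lebesgue measure on $\TT^1$.

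First I would write down $\widetilde{F}=H\circ F\circ H^{-1}$ explicitly. Since $H^{-1}(\theta,z)=(\theta-\nu,h_{\theta-\nu}^{-1}(z))$, a direct substitution gives $\widetilde{F}=(\mathcal{R}_\alpha,\tilde f_\theta)$ with $\tilde f_\theta=h_{\theta-\nu+\alpha}\circ f_{\theta-\nu}\circ h_{\theta-\nu}^{-1}$, and because $H$ carries the graph of $\gamma$ onto that of $\widetilde\gamma$, the invariant curve is $\widetilde\gamma(\theta)=h_{\theta-\nu}(\gamma(\theta-\nu))$ (one checks $\tilde f_\theta(\widetilde\gamma(\theta))=\widetilde\gamma(\theta+\alpha)$ directly). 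Then, abbreviating $a(\theta):=f_\theta'(\gamma(\theta))$ and $b(\theta):=h_\theta'(\gamma(\theta))$ — the latter nowhere zero since $h_\theta$ is conformal — the chain rule together with $h_{\theta-\nu}^{-1}(\widetilde\gamma(\theta))=\gamma(\theta-\nu)$ and $f_{\theta-\nu}(\gamma(\theta-\nu))=\gamma(\theta-\nu+\alpha)$ yields
\begin{equation}
    \tilde a(\theta):=\tilde f_\theta'(\widetilde\gamma(\theta))=\frac{b(\theta-\nu+\alpha)\,a(\theta-\nu)}{b(\theta-\nu)}.
\end{equation}
This single identity is the engine of the proof.

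For part 1, I would use that the winding number about $0$ is additive under products and quotients and invariant under precomposition with an orientation‑preserving circle homeomorphism (here a rigid rotation of the argument); hence $m(\widetilde\gamma)=\operatorname{wind}(\tilde a,0)=\operatorname{wind}(b,0)+\operatorname{wind}(a,0)-\operatorname{wind}(b,0)=m(\gamma)$. For parts 2 and 3, using $m(\widetilde\gamma)=m(\gamma)=:m$ I would factor out the windings: write $a(\theta)=e^{2\pi i m\theta}e^{\mu(\theta)}$ and $b(\theta)=e^{2\pi i \eta\theta}e^{\beta(\theta)}$ with $\mu,\beta:\TT^1\to\CC$ genuinely continuous (possible precisely because the integer winding has been removed, where $\eta=\operatorname{wind}(h_\theta'(\gamma(\theta)),0)$). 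Substituting these into the boxed formula and collecting the $e^{2\pi i(\cdot)}$ factors, the $\theta$‑dependent part of the exponent collapses to $2\pi i m\theta$, so that
\begin{equation}
    e^{-2\pi i m\theta}\,\tilde a(\theta)=e^{2\pi i(\eta\alpha-m\nu)}\;e^{\,\beta(\theta-\nu+\alpha)-\beta(\theta-\nu)+\mu(\theta-\nu)}.
\end{equation}
The right‑hand side is $e^{c(\theta)}$ for a continuous $c:\TT^1\to\CC$, so for any branch of the logarithm $\log\big(e^{-2\pi i m\theta}\tilde a(\theta)\big)$ equals $c(\theta)$ up to an additive constant in $2\pi i\ZZ$, which disappears after integrating and exponentiating. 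Integrating $c$ over $\TT^1$ and invoking translation‑invariance of Lebesgue measure, the two $\beta$‑terms have equal integrals and cancel, while $\int_{\TT^1}\mu(\theta-\nu)\,d\theta=\int_{\TT^1}\mu(\theta)\,d\theta=\log\kappa(\gamma)$ (mod $2\pi i\ZZ$); hence $\kappa(\widetilde\gamma)=\kappa(\gamma)\,e^{2\pi i(\eta\alpha-m\nu)}$. Taking modulus gives $\Lambda(\widetilde\gamma)=\Lambda(\gamma)$, and taking argument gives $\rho(\widetilde\gamma)=\rho(\gamma)+\eta\alpha-m(\gamma)\nu\pmod 1$.

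The main obstacle is the bookkeeping around the multivalued logarithm: making rigorous that the factorizations above produce single‑valued continuous $\mu,\beta$ on $\TT^1$, that the curve $\theta\mapsto h_\theta'(\gamma(\theta))$ is itself a continuous closed curve so that $\eta$ is well defined (this follows from continuity of $H$ via Cauchy estimates on the conformal fibers), and that the branch ambiguity contributes only an irrelevant element of $2\pi i\ZZ$. The genuinely crucial point is the cancellation $\int_{\TT^1}\big[\beta(\theta-\nu+\alpha)-\beta(\theta-\nu)\big]\,d\theta=0$: this is exactly what guarantees that the fibered rotation number changes only by the explicit quantity $\eta\alpha-m(\gamma)\nu$ and picks up nothing more intricate depending on $h_\theta$. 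Everything else is the chain rule and elementary topology of winding numbers.
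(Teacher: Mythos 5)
Your proof is correct and takes essentially the same route as the paper's: compute $\tilde f_\theta'(\tilde\gamma(\theta))$ by the chain rule, get $m(\widetilde\gamma)=m(\gamma)$ from the additivity of winding numbers and the fact that precomposing $b=h'_\cdot(\gamma(\cdot))$ with a rotation does not change its winding, then factor out the integer windings so that the logarithms become single-valued, integrate, and use translation invariance of Lebesgue measure on $\TT^1$ to make the $h'$-contribution telescope to the constant $e^{2\pi i(\eta\alpha-m\nu)}$. The only difference is presentational: you extract single-valued branches $\mu,\beta$ up front and substitute, whereas the paper performs the same factorization inside the $\log$ and splits the integral, but the cancellations and the final identity $\kappa(\widetilde\gamma)=e^{2\pi i(\eta\alpha-m(\gamma)\nu)}\kappa(\gamma)$ are identical.
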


\begin{proof}
    Define $\widetilde{F}:=H\circ F\circ H^{-1}$ as the QPF map in the new coordinates, which takes the form
    \begin{equation*}
        \widetilde{F}(\varphi,u) = ( \varphi+\alpha, \widetilde{f}_{\varphi}(u)), \qquad \mbox{where} \qquad \widetilde{f}_{\varphi}:=h_{\varphi-\nu+\alpha} \circ f_{\varphi-\nu} \circ h_{\varphi-\nu}^{-1},
    \end{equation*}
    and $\alpha\notin\QQ$ is the frequency of $F$, for any $(\varphi,u)\in\TT^1\times\CC$. Given the invariant curve $\gamma$ of $F$, it is clear that 
    \begin{equation*}
        \widetilde{\gamma}(\varphi):=h_{\varphi-\nu}\big(\gamma(\varphi-\nu)\big)
    \end{equation*}
    is an invariant curve of $\widetilde{F}$, i.e. $\widetilde{f}_{\varphi}(\widetilde{\gamma}(\varphi)) = \widetilde{\gamma}(\varphi+\alpha)$. 
    By the chain rule and the Inverse Function Theorem,
    \begin{equation*}
    \widetilde{f}_{\varphi}'(\widetilde{\gamma}(\varphi))= \frac{h_{\varphi-\nu+\alpha}'\big(\gamma(\varphi-\nu+\alpha)\big)}{h_{\varphi-\nu}'\big(\gamma(\varphi-\nu)\big)} \ f_{\varphi-\nu}'\left(\gamma(\varphi-\nu)\right).
    \end{equation*}
    Let $m(\widetilde{\gamma}):=\mathrm{wind}\big(\widetilde{f}_{\varphi}'(\widetilde{\gamma}(\varphi)),0\big)$ be the \textit{index} of $\widetilde{\gamma}$, which coincides with that of $\gamma$ since the winding number is invariant under orientation-preserving homeomorphisms on $\CC$, known to be isotopic to the identity (see e.g. \cite{McCoy2012}, or \cite{Fagella2019} in our QPF context). Thus, we can compute the fibered multiplier of $\widetilde{\gamma}$ as
   \begin{equation*}
       \begin{split}
       \int_{\TT^1} \mathrm{log}\left(e^{-2\pi i \hspace{0.2mm} m(\widetilde{\gamma})\varphi} \hspace{0.4mm} \widetilde{f}_{\varphi}'(\widetilde{\gamma}(\varphi)) \right) d\varphi & = 
       \int_{\TT^1} \mathrm{log}\left(e^{-2\pi i m(\gamma)\nu} e^{-2\pi i \hspace{0.2mm} m(\gamma)\theta} \hspace{0.2mm} f_{\theta}'\left(\gamma(\theta) \right) \right) d\theta 
       \\
       & + \int_{\TT^1} \mathrm{log}\left(\frac{e^{-2\pi i \hspace{0.2mm} \eta (\theta+\alpha)} \hspace{0.2mm} h_{\theta+\alpha}'\left(\gamma(\theta+\alpha)\right)}{e^{-2\pi i \hspace{0.2mm} \eta \theta} \hspace{0.2mm} h_{\theta}'\left(\gamma(\theta)\right)} \ e^{2\pi i \hspace{0.2mm} \eta\alpha}  \right) d\theta,
       \end{split}
   \end{equation*} 
   by means of the change of angular variable $\varphi=\theta+\nu$. Therefore, exponentiating both sides and using the properties of the logarithm, we obtain that $\kappa(\widetilde{\gamma})=e^{2\pi i (\eta\alpha - m(\gamma)\nu)} \kappa(\gamma)$, or equivalently,
    \begin{equation*}
        \Lambda(\widetilde{\gamma}) = \Lambda(\gamma), \qquad \mbox{and} \qquad \rho(\widetilde{\gamma}) = \rho(\gamma) + \eta\alpha - m(\gamma)\nu \quad (\mathrm{mod} \ 1),
    \end{equation*}
    as claimed, considering that $\kappa(\widetilde{\gamma})=e^{\Lambda(\widetilde{\gamma})+2\pi i \rho(\widetilde{\gamma})}$.
    $\hfill\square$ 
\end{proof}

Given such a fibered change of coordinates $H$, defined in a neighborhood of an invariant curve $\gamma$, the integer $\eta=\mathrm{wind}(h_\theta'(\gamma(\theta)),0)$ is often called the \textit{index} of $H$ on $\gamma$. In particular, it follows from this lemma that fibered conjugacies of \textit{zero-index} ($\eta=0$) keeping the angle fixed (i.e. $\nu=0$), always preserve the fibered multiplier of invariant curves. These will be indeed the type of conjugacies involved in our main construction.

\subsection{Linearization around attracting invariant curves}
\label{subsec:2_1_Konigs}

Here we focus on the linear normal form of a QPF holomorphic map $F$ about an attracting invariant curve, generalizing Koenigs Linearization Theorem  \cite{Carleson1993} near an attracting fixed point in the non-fibered case. 

Given an invariant curve $\gamma$ of $F$ with Lyapunov exponent $\Lambda(\gamma)<0$, Ponce \cite[\S2]{Ponce2007phd} proved the existence of an open \textit{invariant tube} $\mathcal{T}(\gamma)\subset\TT^1\times\CC$ about $\gamma$ such that $F(\mathcal{T}(\gamma))\subset \mathcal{T}(\gamma)$ and, for all $\theta$, $\mathcal{T}_\theta(\gamma):=\Pi_2(\mathcal{T}(\gamma)\cap\CC_\theta)$ is conformally isomorphic to $\DD$, and contained in the the basin of attraction $\mathcal{A}(\gamma)$ of $\gamma$, defined in \pref{eq:basinAttraction}; thus
\begin{equation}
    \mathcal{A}(\gamma) = \bigcup_{n\geq 0} F^{-n}(\mathcal{T}(\gamma)).
\end{equation}
In fact, $F$ was shown to be \textit{weakly linearizable} in $\mathcal{T}(\gamma)$, i.e. conformally conjugate to a QPF contracting map 
\begin{equation}
    (\theta,z) \mapsto \left(\theta+\alpha, \ a(\theta) \hspace{0.4mm} z \right), \qquad \mbox{with} \qquad a(\TT^1)\subset \DD^*.
\end{equation}

Even further, in the case of a Diophantine driving frequency $\alpha$, $F$ can be shown to be \textit{strongly linearizable} in the sense of the theorem below, which follows directly from \cite[Prop.~2]{Fagella2019}, together with \cite[Prop.~3.1]{Ponce2007}. Recall that the class $\mathcal{D}$ of \textit{Diophantine numbers} consists of those $\alpha\notin\QQ$ such that, for some $\delta>0$ and $\tau\geq 2$,
\begin{equation}
    \left| \alpha- \frac{p}{q}\right| > \frac{\delta}{|q|^{\tau}}
\end{equation}
for all $\sfrac{p}{q}\in\QQ$. We say that $\alpha$ is \textit{Diophantine of type} $\tau$ and write $\alpha\in\mathcal{D}_\tau$; e.g. the golden mean $\frac{\sqrt{5}-1}{2}\in \mathcal{D}_2$. This arithmetic property helps to deal with the problem of the so-called \enquote{small divisors} when solving the cohomological equations involved, in connection to the theory of exponential stability (see e.g. \cite{Yoccoz1992}). 

\begin{theorem}[Strong linearization]
    \label{thm:Konigs}
    Let $F$ be a QPF map of frequency $\alpha\in\mathcal{D}$, with an attracting invariant curve $\gamma$ of index $m$, fibered multiplier $\kappa\in\DD^*$, and tube $\mathcal{T}(\gamma)$ as above. Then $F$ is conformally conjugate to
    \begin{equation}
        \label{eq:KonigsM}
        L(\theta,z):=(\theta+\alpha, \ \kappa \hspace{0.3mm} e^{2\pi i m \theta}\hspace{0.2mm} z),
    \end{equation}
    via $\Psi:=(\mathrm{Id},\psi_\theta):\mathcal{T}(\gamma)\to\TT^1\times\DD$, which can be chosen of zero-index with $\psi_\theta(\gamma(\theta))=0$ for all $\theta$.
\end{theorem}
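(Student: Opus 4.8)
The plan is to reduce the statement, by a short chain of explicit fibered conformal conjugacies, to a single additive cohomological equation over the rotation $\mathcal{R}_\alpha$ — which is exactly where the Diophantine hypothesis on $\alpha$ is used — while carrying along the two normalizations ($\psi_\theta(\gamma(\theta))=0$ and zero index of $\Psi$), both of which become cheap once the straightening is in place. First I would conjugate $F$ by the fibered translation $(\theta,z)\mapsto(\theta,z-\gamma(\theta))$: it is affine on each fiber, angle-preserving and of zero index, so by \pref{lem:InvMultiplier} it leaves $m$ and $\kappa$ unchanged, and afterwards $\gamma\equiv 0$. Then I would invoke Ponce's construction on the invariant tube \cite[\S2]{Ponce2007phd}: $F|_{\mathcal{T}(\gamma)}$ is conformally conjugate, via some $\Psi_1=(\mathrm{Id},\psi_{1,\theta})$ with $\psi_{1,\theta}(0)=0$, to a QPF contraction $N(\theta,z)=(\theta+\alpha,a(\theta)z)$ with $a(\TT^1)\subset\DD^{*}$. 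Since the winding number is a conjugacy invariant (\pref{lem:InvMultiplier}), $\mathrm{wind}(a(\theta),0)=m$; multiplying $\psi_{1,\theta}$ by a suitable $e^{2\pi ik\theta}$ — which merely rescales $a$ by a unimodular constant and preserves $\psi_{1,\theta}(0)=0$ — I may also assume $\Psi_1$ has zero index. Then $e^{-2\pi im\theta}a(\theta)$ has vanishing winding, hence a continuous logarithm: $a(\theta)=e^{2\pi im\theta}e^{A(\theta)}$ with $A\colon\TT^1\to\CC$ continuous, and \pref{lem:InvMultiplier} now forces $\int_{\TT^1}A(\theta)\,d\theta\equiv\log\kappa\pmod{2\pi i}$; I fix the branch of $\log\kappa$ so that equality holds.

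Next I would straighten $a$ to $\kappa e^{2\pi im\theta}$ by one more conjugacy $\Psi_2(\theta,z)=(\theta,c(\theta)z)$ with $c\colon\TT^1\to\CC^{*}$ of zero winding (so that $\Psi_2$ again has index zero). The conjugacy relation $c(\theta+\alpha)\,a(\theta)/c(\theta)=\kappa e^{2\pi im\theta}$ becomes, writing $c=e^{u}$, the cohomological equation
\[
  u(\theta+\alpha)-u(\theta)=\log\kappa-A(\theta),
\]
whose right-hand side has zero average by the choice of branch. Here the factor $e^{2\pi im\theta}$ in $L$ is genuinely unavoidable: $m$ is a conjugacy invariant by \pref{lem:InvMultiplier}, so $F$ is conjugate to the constant model $z\mapsto\kappa z$ only when $m=0$.

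Solving this equation is the heart of the matter, and the step I expect to be the main obstacle. Formally $\widehat{u}(k)=\widehat{g}(k)\,(e^{2\pi ik\alpha}-1)^{-1}$ for $k\neq 0$, with $g:=\log\kappa-A$; the Diophantine bound $|e^{2\pi ik\alpha}-1|^{-1}\lesssim|k|^{\tau-1}$ makes this series converge to a genuinely continuous $u$ provided $\widehat{g}(k)$ decays fast enough, i.e.\ provided the weak-linearization multiplier $a$ is sufficiently regular. Matching these small divisors against the regularity actually available is precisely the content of \cite[Prop.~3.1]{Ponce2007} together with \cite[Prop.~2]{Fagella2019}, which I would invoke at this point; everything else is bookkeeping with \pref{lem:InvMultiplier} and the chain rule. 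Given a continuous solution $u$, I set $\Psi:=\Psi_2\circ\Psi_1\circ(\text{translation})$: each of the three factors has index zero and indices add under composition, so $\Psi$ has index $0$; it is angle-preserving with $\psi_\theta(\gamma(\theta))=0$; and it conjugates $F$ to $L$. Finally, to make the image exactly $\TT^1\times\DD$, I would rescale the straightening coordinate by a suitable constant (part of the residual freedom $\widehat{u}(0)\in\CC$) so that each fiber image contains $\DD$, and then replace $\mathcal{T}(\gamma)$ by the sub-tube $\Psi^{-1}(\TT^1\times\DD)$ — still $F$-invariant because $|\kappa|<1$, and with fibers conformal to $\DD$ — obtaining $\Psi\colon\mathcal{T}(\gamma)\to\TT^1\times\DD$ exactly as stated.
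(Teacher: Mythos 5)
Your proposal is correct and proves the statement, but it rearranges the two main ingredients in the opposite order from the paper, and it is worth comparing the two. The paper's sketch first normalizes the \emph{linear} part: with $\gamma\equiv 0$ it writes $f_\theta(z)=a(\theta)z+z^2b_\theta(z)$, solves the cohomological equation $u(\theta+\alpha)-u(\theta)=\Lambda+2\pi i\rho-\mathcal{L}(\theta)$ with $\mathcal{L}(\theta)$ a branch of $\log\big(e^{-2\pi i m\theta}a(\theta)\big)$ directly on the \emph{original} derivative $a(\theta)=f_\theta'(0)$ (this is where $\alpha\in\mathcal{D}$ enters), and only then runs a Koenigs-type iteration
\[
    g_\theta^n(z)=\Big(\prod_{j=0}^{n-1}\kappa\,e^{2\pi i m(\theta+j\alpha)}\Big)^{-1}\widetilde f_\theta^{\,n}(z)
\]
to kill the quadratic remainder, constructing the invariant tube $\mathcal{T}(0)=\Psi_1^{-1}(\TT^1\times B_R(0))$ along the way. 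You do it in the other order: you invoke Ponce's weak linearization (which holds for every irrational $\alpha$) to dispose of the nonlinear part, and then solve the cohomological equation on the resulting linear coefficient. Both routes use exactly the same technical core — a small-divisor cohomological equation plus a Koenigs-type iteration — and you cite the same propositions of Ponce and Fagella--Herman for the regularity needed in the small-divisor step, just as the paper does. The advantage of your order is conceptual: it isolates the Diophantine hypothesis to a single additive equation over $\mathcal{R}_\alpha$ and treats everything else as a black box valid for all irrational frequencies. The advantage of the paper's order is that straightening the linear part first makes the Koenigs telescoping product explicit and immediately yields the uniform contraction estimate $|\widetilde f_\theta^{\,n}(z)|\le\big(\tfrac{1+e^\Lambda}{2}\big)^n|z|$ that produces the tube; it also keeps the cohomological equation expressed in terms of the original data $f_\theta'(0)$ rather than the derived coefficient of the weak linearization. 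Your bookkeeping of indices (all three factors have index zero, and indices add under composition), your use of \pref{lem:InvMultiplier} to force $\int_{\TT^1}A\,d\theta=\log\kappa$ for the correct branch, and your final rescaling using the free constant $\widehat u(0)$ together with the $F$-invariant sub-tube $\Psi^{-1}(\TT^1\times\DD)$, all match what is needed and are consistent with the paper's normalizations.
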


In the remainder of this subsection, and for the sake of completeness, we point out the main steps to get such a linearizing map $\Psi$ (see details in \cite{Fagella2019,Ponce2007}), which makes the following diagram commute:
\begin{figure}[H]
    \centering
    \begin{tikzcd}[column sep=30pt,row sep=30pt]
    \mathcal{T}(\gamma)\arrow[r,"F"] \arrow[d,"\Psi"'] &
    \mathcal{T}(\gamma) \arrow[d,"\Psi"]
    \\
    \TT^1\times\DD \arrow[r,"L"'] &
    \TT^1\times\DD
    \end{tikzcd}
\end{figure}

\noindent Note that $\gamma$ may be placed at the zero-section $\TT^1\times \{0\}$ via the fibered translation $(\theta,z)\mapsto (\theta,z-\gamma(\theta))$. Thus, without loss of generality, assume that the invariant curve of $F$ under consideration is $\gamma\equiv 0$, and
\begin{equation}
    F(\theta,z) = \left(\theta+\alpha, \  a(\theta)z+z^2 b_\theta(z)\right),
\end{equation}
where $a(\theta):=f_\theta'(0)\neq 0$, and $b:\TT^1\times\CC\to\CC$ is a continuous function which is holomorphic in each fiber. Given that $m=\mathrm{wind}(a(\theta),0)$, there exists a holomorphic branch $\mathcal{L}(\theta)$ of $\log\left(e^{-2\pi i m\theta} a(\theta)\right)$ on $\TT^1$. In the situation of \pref{thm:Konigs}, $\gamma\equiv 0$ has Lyapunov  exponent $\Lambda:=\log|\kappa|<0$. Write $\kappa=e^{\Lambda+2\pi i \rho}$, with $\rho\in\TT^1$. 
    
\noindent Following \cite[\S3.1]{Fagella2019}, the first step is to transform the linear part of $F$ into $\kappa \hspace{0.2mm} e^{2\pi i m \theta}$ via the fibered linear map $\Psi_1(\theta,z):=(\theta, e^{u(\theta)} z)$, where $u:\TT^1\to\CC$ is taken as the zero-mean solution of the cohomological equation
\begin{equation}
    \label{eq:CohomologicalEq}
    u(\theta+\alpha) - u(\theta) = \Lambda + 2\pi i \rho - \mathcal{L}(\theta)
\end{equation}
such  that
\begin{equation}
    \widetilde{F}(\theta,z) := \Psi_1\circ F\circ\Psi_1^{-1}(\theta,z) = \big(\theta+\alpha,  \ \kappa \hspace{0.3mm} e^{2\pi i m\theta} z + z^2 \hspace{0.3mm}\widetilde{b}_\theta(z)\big), 
\end{equation}
where $\widetilde{b}_\theta(z)= e^{u(\theta+\alpha)-2u(\theta)} b_\theta(z)$. This is possible since, comparing coefficients of the Fourier expansions from \pref{eq:CohomologicalEq} and denoting the $k$-th Fourier coefficient of $\mathcal{L}(\theta)$ by $\mathcal{L}_k$, we obtain the series
\begin{equation}
    u(\theta) = \sum_{k\in\ZZ^*} \frac{\mathcal{L}_k}{1-e^{2\pi i k\alpha}} e^{2\pi i k \theta},
\end{equation}
which is absolutely and uniformly convergent, as its coefficients shall decay exponentially with the Fourier mode $|k|$ due to the Diophantine condition on $\alpha$. 
    
\noindent Moreover, if $M:=\sup_{\theta\in\TT^1} \big\{ \widetilde{b}_\theta(z): z\in\overline{\DD} \big\}$ and $R_c:=\operatorname{dist}\big(0, \mathrm{Crit}(\widetilde{F})\big)$, the invariant tube can be given as
\begin{equation}
    \mathcal{T}(0) = \Psi_1^{-1}\left( \TT^1\times B_{R}(0) \right), \qquad \mbox{with} \qquad R:=\min\left\{1, R_c, \frac{1-e^{\Lambda}}{2M} \right\},
\end{equation}
so that $\big|\widetilde{f}_\theta^n(z)\big| \leq \big(\frac{1+e^{\Lambda}}{2}\big)^n|z|$ for $|z|<R$, $n\in\NN$. The rest of the construction is an adaption of the classical one in the non-fibered setting due to Koenigs. In fact, it is shown in \cite[\S4.1]{Ponce2007} that the sequence
\begin{equation}
    \bigg\{ g_\theta^n(z):=\bigg( \prod_{j=0}^{n-1} \kappa \hspace{0.3mm} e^{2\pi i m (\theta+j\alpha)} \bigg)^{-1} \widetilde{f}_\theta^n(z) \bigg\}_{n\geq 0}
\end{equation}
converges uniformly on $\TT^1\times B_R(0)$ to a limit function $\widetilde{g}$ which is continuous and holomorphic in each fiber, with $\widetilde{g}_\theta^{}(0)=0$ and $\widetilde{g}_\theta'(0)=1$ for all $\theta$. Given that, for all $\theta\in\TT^1$ and $|z|<R$,
\begin{equation}
    g_{\theta+\alpha}^n \circ \widetilde{f}_\theta(z) = \kappa \hspace{0.2mm} e^{2\pi i m\theta} g_\theta^{n+1}(z),
\end{equation}
defining $\Psi_2(\theta,z):=(\theta, \widetilde{g}_\theta^{}(z))$, we conclude that $\Psi:=\Psi_2\circ\Psi_1$ is a conformal conjugacy between $F$ and the map $L$ in \pref{eq:KonigsM} from linearizing about $\gamma\equiv 0$, up to rescaling by $z\mapsto \frac{1}{R}z$. 
    
This derivation also clarifies why the linearizing map $\Psi$ can be (and will be from now on) chosen of zero-index, i.e. with $\mathrm{wind}(\psi_\theta'(\gamma(\theta)),0)=0$, since the winding number of a pointwise product of loops in $\CC^*$ is the sum of their winding numbers (see e.g. \cite[\S7]{Beardon1979}). Indeed, notice that in our case both $\Psi_1$ and $\Psi_2$ are of index zero, given that $\mathrm{wind}(e^{u(\theta)},0)=\frac{u(1)-u(0)}{2\pi i} = 0$ and $\mathrm{wind}(\widetilde{g}_\theta'(0),0)=0$, respectively.

\subsection{Fibered Julia sets of QPF polynomials}
\label{subsec:2_2_QPFpolyn}

In order to talk about the global dynamics of fibered holomorphic maps, we focus on the class of \textit{QPF polynomials} of fixed degree $d\geq 2$, i.e. continuous maps which take the form
\begin{equation}
    P(\theta,z)=\left(\theta+\alpha, \ p_\theta^{}(z)\right), \qquad \mbox{with} \qquad p_\theta(z) = c_d(\theta) z^d + \cdots + c_0(\theta),
\end{equation}
where $\alpha\notin\QQ$ and each coefficient $c_j:\TT^1\to \CC$ is continuous, with $c_d(\theta)\neq 0$ for all $\theta\in\TT^1$. 

In this polynomial context, there is a clear generalized notion of filled-in Julia set as detailed in the work of Sester \cite{Sester1997phd}. Note that, analogously to the one-dimensional polynomial case, we can find an \textit{escape radius}
\begin{equation}
    R^*:= \max_{\theta\in\TT^1} \left(1, \frac{1+|c_{d-1}(\theta)|+\dots+|c_0(\theta)|}{|c_d(\theta)|} \right),
\end{equation}
so that $|p_\theta^{}(z)|\geq \frac{|z|^d}{R^*}$ for $|z|>R^*$, defining the \textit{basin of attraction of $\infty$} as the non-empty open connected set
\begin{equation}
    \mathcal{A}(\infty) := \big\{(\theta,z): p^n_\theta(z)\to\infty \quad \mbox{ as } \quad n\to\infty \big\} = \bigcup_{n\in\NN} f^{-n}\left(\TT^1 \times \overline{D(0,R^{*})}^{\hspace{0.3mm} c} \right).
\end{equation}

\begin{definition}[Fibered filled-in Julia set]
    Let $P$ be a QPF polynomial. The \textit{fibered filled-in Julia set} of $P$ consists of those points in $\TT^1\times\CC$ with bounded orbit under iteration by $P$, denoted by
    \begin{equation}
        \mathcal{K}:=\big\{ (\theta,z)\in\TT^1\times\CC: \ \sup_{n\in\NN} |p_\theta^n(z)|<\infty \big\}.
    \end{equation}
\end{definition}

It is known that the set $\mathcal{K}=A(\infty)^c$ is full and compact, just like all its fibers $\mathcal{K}_\theta:=\Pi_2(\mathcal{K}\cap\CC_\theta)$ (see details in \cite[Prop.~2.3]{Sester1999}), and it is also \textit{completely invariant} in the sense that, for all $\theta\in\TT^1$ and $n\in\NN$,
\begin{equation}
    p_\theta^n(\mathcal{K}_\theta) = \mathcal{K}_{\theta+n \alpha}, \qquad p_\theta^{-n}(\mathcal{K}_\theta) = \mathcal{K}_{\theta -n \alpha}.
\end{equation}

Furthermore, Sester \cite{Sester1997phd} defined the \textit{fibered Julia set} of $P$, where the chaotic behavior occurs, as
\begin{equation}
    \mathcal{J} := \overline{\bigsqcup_{\theta\in\TT^1}\{\theta \}\times \partial \mathcal{K}_\theta} \subset \partial \mathcal{K},
\end{equation}
and proved that, although this inclusion is generally strict, equality holds for fibered hyperbolic polynomials.

\begin{definition}[QPF hyperbolic polynomials]
    We say that a QPF polynomial $P$ is \textit{hyperbolic} if it is uniformly expanding on its fibered Julia set, i.e. there exists $A>0$ and $\sigma>1$ such that, for all $(\theta,z)\in \mathcal{J}$,
    \begin{equation}
        \left|(p_\theta^n)'(z)\right| \geq A \sigma^n.
    \end{equation}
\end{definition}

Among the several equivalent definitions of hyperbolicity \cite[Thm.~1.1]{Sester1999}, the following one is the most useful for our purposes: the \textit{postcritical set} of a hyperbolic polynomial $P$ do not intersect the Julia set, i.e.
\begin{equation}
    \label{eq:PostCritical}
    \overline{\bigcup_{n\in\NN} P^n\left(\mathrm{Crit}(P)\right)} \hspace{0.2mm} \cap \hspace{0.2mm} \mathcal{J} = \emptyset, \qquad \mbox{where} \qquad \mathrm{Crit}(P):= \left\{ (\theta,z)\in\TT^1\times\CC: \ p_\theta'(z)=0 \right\}.
\end{equation}
In fact, in analogy to the non-fibered case, the set $\mathrm{Crit}(P)\subset\TT^1\times\CC$ of critical curves of an arbitrary QPF polynomial $P$ determines to a large extent its dynamics since, e.g., it is known that all fibers of its filled-in Julia set $\mathcal{K}$ are connected if and only if $\mathrm{Crit}(P)\subset \mathcal{K}$, and the basin of attraction of any attracting invariant curve of $P$ must intersect $\mathrm{Crit}(P)$ at infinitely many fibers (and quite often); see more details in \cite[\S3]{Sester1999}. 

In the hyperbolic setting, the set-valued maps $\theta\mapsto\mathcal{J}_\theta$ and $\theta\mapsto \mathcal{K}_\theta$ are known to be \textit{Hausdorff continuous}; see \cite[Prop.~4.1]{Sester1999} for $\mathcal{J}$, and \cite[Prop.~3.4]{Dominguez2023} for $\mathcal{K}$ in our QPF context. This is in analogy to the non-fibered case \cite{Douady1994}, as here the angular variable (the fiber) plays the role of the complex parameter. Recall that this notion of continuity for maps from $\TT^1$ to the set $\mathrm{Comp}^*(\CC)$ of non-empty compact subsets of $\CC$, is induced by the so-called \textit{Pompeiu-Hausdorff distance} (see e.g. \cite{Rockafellar1998}) between two sets $A,B\in \mathrm{Comp}^*(\CC)$:
\begin{equation}
    \operatorname{dist}_H(A,B) := \max\Big\{ \sup_{a\in A} \operatorname{dist}(a,B), \ \sup_{b\in B} \operatorname{dist}(A,b) \Big\} = \inf\left\{ \varepsilon>0: A\subset B^\varepsilon, \ B\subset A^\varepsilon \right\}.
\end{equation}
The Hausdorff continuity for $\mathcal{J}$ and the Koebe Distortion Theorem lead to the following result on the measure of the Julia set in the hyperbolic case \cite[Cor.~4.3]{Sester1999}, which is key for our QC surgery construction.

\begin{proposition}[Fibered Julia set]
    \label{prop:JuliaMeas}
    Let $P$ be a QPF hyperbolic polynomial, and $\mathcal{K}$ its filled-in Julia set. 
    \noindent Then $\mathcal{J}=\partial \mathcal{K}$, and $\mathcal{J}_\theta=\partial\mathcal{K}_\theta$ which has zero Lebesgue measure for all $\theta\in\TT^1$.
\end{proposition}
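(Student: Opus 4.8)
The plan is to deduce the two set identities from the Hausdorff continuity of the fibered filled-in Julia set, and the measure assertion from a fiberwise Koebe distortion estimate that exploits hyperbolicity. \emph{The identities.} First I would show $\partial\mathcal K=\bigsqcup_{\theta\in\TT^1}\{\theta\}\times\partial\mathcal K_\theta$. The inclusion ``$\supseteq$'' is immediate: if $z\in\partial\mathcal K_\theta$ then $(\theta,z)\in\mathcal K$ while $z$ is a limit of points outside $\mathcal K_\theta$, so $(\theta,z)\in\partial\mathcal K$. For ``$\subseteq$'' I would use that $\theta\mapsto\mathcal K_\theta$ is Hausdorff continuous \cite[Prop.~3.4]{Dominguez2023}, together with the fact that for full compacta Hausdorff convergence of the sets is equivalent to Hausdorff convergence of their boundaries: this gives, for any $z_0\in\operatorname{int}\mathcal K_{\theta_0}$, a fixed ball about $z_0$ contained in $\mathcal K_\theta$ for all $\theta$ near $\theta_0$, hence $(\theta_0,z_0)\in\operatorname{int}\mathcal K$, so no point of $\partial\mathcal K$ lies in the interior of its fiber. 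Since $\bigsqcup_\theta\{\theta\}\times\partial\mathcal K_\theta\subseteq\mathcal J\subseteq\partial\mathcal K$ (the first inclusion by the definition of $\mathcal J$ as a closure, the second as recalled above) and $\partial\mathcal K$ is closed, we conclude $\mathcal J=\partial\mathcal K=\bigsqcup_\theta\{\theta\}\times\partial\mathcal K_\theta$, and in particular $\mathcal J_\theta=\partial\mathcal K_\theta$ for every $\theta$. I would also record here, for later use, that $\mathcal J$ is completely invariant, i.e. $(p_\theta^n)^{-1}(\mathcal J_{\theta+n\alpha})=\mathcal J_\theta$ for all $n$: this holds because $\mathcal K$ is completely invariant and each $p_\theta$ is an open map, so $(p_\theta^n)^{-1}$ carries interior to interior and boundary to boundary.

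\emph{A fiber-uniform density gap.} Next I would extract: for any fixed $s>0$ there is $\varepsilon(s)>0$ such that $\operatorname{Leb}\!\big(\mathcal J_\theta\cap B_{s}(z)\big)\le(1-\varepsilon(s))\operatorname{Leb}(B_{s})$ for every $\theta\in\TT^1$ and every $z\in\mathcal J_\theta$. Indeed, each $\mathcal J_\theta=\partial\mathcal K_\theta$ is nowhere dense in $\CC$ (an open subset of the boundary of a closed set is empty), and the map $(\theta,z)\mapsto\operatorname{Leb}\!\big(\mathcal J_\theta\cap\overline{B_{s}(z)}\big)$ is upper semicontinuous on the compact set $\mathcal J$ thanks to the Hausdorff continuity of $\theta\mapsto\mathcal J_\theta$ \cite[Prop.~4.1]{Sester1999}; hence it attains a maximum, and this maximum must be strictly below $\operatorname{Leb}(B_s)$, since otherwise some closed set $\mathcal J_\theta$ would have full measure in a ball and hence contain it.

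\emph{The fiberwise Koebe argument.} Fix $\theta_0$ and suppose, for contradiction, $\operatorname{Leb}(\mathcal J_{\theta_0})>0$. Let $z_0\in\mathcal J_{\theta_0}$ be a Lebesgue density point and set $y_n:=p_{\theta_0}^n(z_0)\in\mathcal J_{\theta_0+n\alpha}$. By hyperbolicity the postcritical set is compact and disjoint from $\mathcal J$, and $|(p_{\theta_0}^n)'(z_0)|\ge A\sigma^n$; the standard consequence, transplanted to the fibered setting from the proof of \cite[Prop.~4.1]{Sester1999}, is that there is a fixed $s>0$, independent of $n$, such that the inverse branch $\psi_n$ of $p_{\theta_0}^n$ taking $y_n$ to $z_0$ is defined and univalent on the round disk $B_{2s}(y_n)\subset\CC_{\theta_0+n\alpha}$. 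Koebe's Distortion Theorem then bounds the distortion of $\psi_n$ on $B_s(y_n)$ by a universal constant $K$ and forces $W_n:=\psi_n(B_s(y_n))$ to be nearly round, $B_{\rho_n}(z_0)\subseteq W_n\subseteq B_{C\rho_n}(z_0)$ with $C$ universal and $\rho_n\asymp s/|(p_{\theta_0}^n)'(z_0)|\to0$. By the complete invariance of $\mathcal J$, $\psi_n\big(B_s(y_n)\setminus\mathcal J_{\theta_0+n\alpha}\big)=W_n\setminus\mathcal J_{\theta_0}$, and the set on the left has measure at least $\varepsilon(s)\operatorname{Leb}(B_s)$ by the previous step; pulling back with distortion at most $K$ gives $\operatorname{Leb}(W_n\setminus\mathcal J_{\theta_0})\ge K^{-2}\varepsilon(s)\operatorname{Leb}(W_n)\ge K^{-2}C^{-2}\varepsilon(s)\operatorname{Leb}(B_{C\rho_n}(z_0))$. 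Hence the density of $\mathcal J_{\theta_0}$ in $B_{C\rho_n}(z_0)$ stays below $1-K^{-2}C^{-2}\varepsilon(s)<1$ along the sequence $C\rho_n\to0$, contradicting the choice of $z_0$. Thus $\operatorname{Leb}(\mathcal J_{\theta_0})=0$, and $\theta_0$ being arbitrary, the proposition follows.

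\emph{Expected main obstacle.} The one genuinely delicate step is the claim invoked above as ``the standard consequence'' of hyperbolicity: that for a point of $\mathcal J$ the inverse branches of \emph{all} iterates $p_{\theta_0}^n$ are defined, univalent, and of uniformly bounded Koebe distortion on definite-size round disks centered along the forward orbit. This is precisely where the hypotheses must be used — the postcritical set avoiding $\mathcal J$ keeps the pullbacks from hitting critical values, and the uniform expansion (after replacing $P$ by an iterate, or passing to an adapted metric, if necessary) keeps the pullbacks inside a tube around $\mathcal J$ that stays away from the postcritical set — and it is the fibered analogue, with the angle $\theta$ in the role of a (real) parameter, of the mechanism behind \cite[Prop.~4.1]{Sester1999}.
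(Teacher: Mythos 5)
The paper gives no proof of this proposition: it is cited from \cite[Cor.~4.3]{Sester1999}, with the remark that Hausdorff continuity and the Koebe Distortion Theorem are the ingredients. Your overall route — a fiber-uniform density gap via Hausdorff continuity, then shrinking Koebe disks along the forward orbit — matches that indication, and your measure-zero half is a correct fiberwise adaptation of the classical hyperbolic-Julia-set argument (modulo the deferred inverse-branch lemma, which you reasonably flag as the technical crux). The complete-invariance and density-gap steps are sound, and the density-point contradiction is carried out correctly.

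The gap is in the set-identity half, in the topological lemma you invoke: \emph{for full compacta, Hausdorff convergence of the sets is equivalent to Hausdorff convergence of their boundaries}. The forward implication you need is false. Take $K_n:=\overline{\DD}\setminus\{x+iy:x>0,\ |y|<1/n\}$, a closed unit disk with a thin open slit removed. Each $K_n$ is a full compact set (its complement is connected through the slit), and $K_n\to\overline{\DD}$ in Hausdorff distance, yet $\partial K_n\to\partial\DD\cup[0,1]\ne\partial\DD$. So Hausdorff continuity of $\theta\mapsto\mathcal{K}_\theta$ \cite[Prop.~3.4]{Dominguez2023} by itself cannot give the interior-ball claim at $z_0\in\operatorname{int}\mathcal{K}_{\theta_0}$, which is the whole content of $\partial\mathcal{K}\subset\bigsqcup_\theta\{\theta\}\times\partial\mathcal{K}_\theta$. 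What you actually need — and what is available, and what you already cite later for the density gap — is the hyperbolicity-dependent Hausdorff continuity of $\theta\mapsto\partial\mathcal{K}_\theta$ \cite[Prop.~4.1]{Sester1999}. With \emph{both} continuities in hand the interior-ball claim follows from a short connectedness argument: if $B_{2r}(z_0)\subset\mathcal{K}_{\theta_0}$, then for $\theta$ near $\theta_0$ one has $B_r(z_0)\cap\partial\mathcal{K}_\theta=\emptyset$ while $B_r(z_0)$ contains a point of $\mathcal{K}_\theta$, and being connected it must lie in $\mathcal{K}_\theta$; hence $(\theta_0,z_0)\in\operatorname{int}\mathcal{K}$. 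Moreover, the identity $\mathcal{J}_\theta=\partial\mathcal{K}_\theta$ then drops out directly, because a Hausdorff-continuous map $\theta\mapsto\partial\mathcal{K}_\theta$ from the compact base $\TT^1$ into $\mathrm{Comp}^*(\CC)$ has closed graph, so $\bigsqcup_\theta\{\theta\}\times\partial\mathcal{K}_\theta$ is already closed and coincides with its closure $\mathcal{J}$.
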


Note that $\CC\backslash \mathcal{J}_\theta$ corresponds to the set of points $z$ such that the iterates $\{p_\theta^n\}_{n\in\NN}$ form a normal family on a neighborhood of $z$ (see remark in \cite[\S2.2]{Sester1999}, and \cite[\S5]{Roy2002}), as in the general non-autonomous framework \cite{Comerford2006}. Hence this notion and related properties may be fiberwise adapted to $\TT^1\times\CC$ in such a hyperbolic situation. 

\subsection{Quasiconformal maps and MRMT}
\label{subsec:2_3_QC_MRMT}

Recall that QC maps are generalizations of conformal (i.e. one-to-one analytic) maps which allow bounded distortion of angles. To be precise, a fiber map $h_\theta:\mathcal{U}_\theta\to\mathcal{V}_\theta$ is a \textit{$K$-QC} map, $K\geq 1$, if it is an orientation-preserving homeomorphism, absolutely continuous on lines and with bounded \textit{dilatation}:
\begin{equation}
    K_{\theta}:=\frac{1+\lVert \mu_\theta^{} \rVert_\infty}{1-\lVert \mu_\theta^{} \rVert_\infty} \leq K, \qquad \mbox{where} \qquad \lVert \mu_\theta^{} \rVert_\infty := \underset{z\in \mathcal{U}_\theta}{\mathrm{ess \hspace{0.8mm} sup}} \hspace{0.8mm} |\mu_\theta^{}(z)| \leq \frac{K-1}{K+1}
\end{equation}
is the essential supremum of the \textit{Beltrami coefficient} of $h_\theta$, defined as the function $\mu_\theta^{}\in L^\infty(\mathcal{U}_\theta)$ satisfying 
\begin{equation}
   \partial_{\bar{z}} h_\theta(z) = \mu_\theta^{}(z)  \hspace{0.4mm} \partial_z h_\theta(z),  \qquad \mbox{for almost every (a.e.)} \ z\in \mathcal{U}_\theta.
\end{equation}
Geometrically, $\mu_\theta^{}$ can be interpreted as a measurable field of infinitesimal ellipses (defined up to scaling) on the tangent bundle over $\mathcal{U}_\theta^{}$, known as an \textit{almost complex structure}. At the points where $h_\theta$ is differentiable, its differential carries these ellipses to a field of circles (the \textit{standard complex structure}). In fact, $K_{\theta}$ measures the deviation of $h_\theta$ from the rigid class of conformal (i.e. $1$-QC) maps; see further details in \cite{Ahlfors2006, Astala2008, Branner2014, Lehto1973, Hubbard2006}.

The main tool in the quasiconformal surgery construction we shall perform to prove \pref{thm:A_QCqpf}, is the celebrated \textit{Measurable Riemann Mapping Theorem} (MRMT), also known as the \textit{Integrability Theorem}.

\begin{theorem}[MRMT -- global version]
    \label{thm:MRMTglobal}
    Let $\mu\in L^\infty(\CC)$ be a Beltrami coefficient satisfying $||\mu||_\infty\leq k<1$. Then there is a QC map $\phi:\CC\to\CC$ which \textit{integrates} $\mu$, i.e. solves the Beltrami equation 
    \begin{equation}
        \partial_{\bar{z}}\phi \hspace{0.3mm} / \hspace{0.3mm} \partial_z\phi = \mu 
    \end{equation}
    almost everywhere. Furthermore, $\phi$ is unique up to post-composition with a non-constant affine map.
\end{theorem}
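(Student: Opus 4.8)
The plan is to reduce the statement to the two standard analytic facts about the Cauchy and Beurling transforms on $L^p(\CC)$ and then solve the Beltrami equation by a Neumann series. First I would recall the singular integral operators at play: the \emph{Cauchy transform}
\begin{equation*}
    (\mathcal{C}h)(z) := -\frac{1}{\pi}\int_\CC \frac{h(\zeta)}{\zeta-z}\,dA(\zeta),
\end{equation*}
which satisfies $\partial_{\bar z}(\mathcal{C}h)=h$ in the distributional sense, and the \emph{Beurling transform} $\mathcal{B}:=\partial_z\circ\mathcal{C}$, which acts as a Fourier multiplier by $\bar\xi/\xi$, hence is an isometry on $L^2(\CC)$ and — by the Calderón--Zygmund theory — a bounded operator on $L^p(\CC)$ for all $1<p<\infty$, with operator norm $\lVert\mathcal{B}\rVert_p\to 1$ as $p\to 2$. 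The key quantitative input is that, since $k<1$, one may fix $p>2$ close enough to $2$ that $k\lVert\mathcal{B}\rVert_p<1$; this is the one place where the hypothesis $\lVert\mu\rVert_\infty\le k<1$ (rather than merely $<1$ pointwise) is used, and it is the technical heart of the argument.

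Next I would assume first that $\mu$ has compact support (say $\mu\equiv 0$ outside a large disk), and look for a solution of the form $\phi(z)=z+\mathcal{C}h(z)$ with $h\in L^p(\CC)$. Applying $\partial_z$ and $\partial_{\bar z}$ and substituting into $\partial_{\bar z}\phi=\mu\,\partial_z\phi$ gives the fixed-point equation $h=\mu(1+\mathcal{B}h)$, i.e. $(I-\mu\mathcal{B})h=\mu$. Since multiplication by $\mu$ has $L^p$-norm $\le k$ and $k\lVert\mathcal{B}\rVert_p<1$, the operator $\mu\mathcal{B}$ is a contraction on $L^p(\CC)$, so $h=\sum_{n\ge 0}(\mu\mathcal{B})^n\mu$ converges in $L^p$ and solves the equation. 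Setting $\phi:=z+\mathcal{C}h$ one checks that $\phi$ is continuous (since $\mathcal{C}$ maps $L^p$, $p>2$, into $C^{0}$ with a Hölder estimate), that $\partial_z\phi=1+\mathcal{B}h\in 1+L^p$ and $\partial_{\bar z}\phi=h\in L^p$, so $\phi$ lies in $W^{1,p}_{\mathrm{loc}}$, is absolutely continuous on lines, and satisfies the Beltrami equation a.e.; moreover $\partial_z\phi\ne 0$ a.e. because $\lvert\partial_{\bar z}\phi\rvert\le k\lvert\partial_z\phi\rvert$. The remaining point is that $\phi$ is a global homeomorphism: local injectivity and the open mapping property follow from the fact that $\phi$ is a nonconstant solution of a Beltrami equation (a ``quasiregular'' map, hence discrete and open by the Stoïlow factorization $\phi=g\circ\Phi$ with $g$ holomorphic and $\Phi$ a local homeomorphism), and properness at $\infty$ follows from the normalization $\phi(z)=z+O(1/\lvert z\rvert)$ as $z\to\infty$; a proper local homeomorphism of $\CC$ onto $\CC$ is a homeomorphism.

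Finally I would remove the compact-support assumption by a standard exhaustion/reflection trick: given arbitrary $\mu$ with $\lVert\mu\rVert_\infty\le k$, one solves the problem separately for $\mu\cdot\mathbf 1_{\DD}$ and for the pullback of $\mu\cdot\mathbf 1_{\CC\setminus\DD}$ under $z\mapsto 1/\bar z$ (both now compactly supported), and glues the two quasiconformal maps along the unit circle, using that the composition and inverse of quasiconformal maps are quasiconformal and that the Beltrami coefficients match; alternatively one passes to the limit over $\mu\cdot\mathbf 1_{B(0,R)}$ as $R\to\infty$ using compactness of normalized $K$-quasiconformal maps. For uniqueness, if $\phi_1,\phi_2$ both integrate $\mu$ then $\phi_1\circ\phi_2^{-1}$ has Beltrami coefficient $0$ a.e., hence is $1$-quasiconformal, hence conformal on $\CC$ by Weyl's lemma, hence affine; thus $\phi$ is determined up to post-composition by a nonconstant affine map. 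The main obstacle, and the step deserving the most care, is the $L^p$-contraction argument: one must invoke the continuity of $p\mapsto\lVert\mathcal{B}\rVert_p$ near $p=2$ together with $\lVert\mathcal{B}\rVert_2=1$ to choose an admissible $p>2$, and then verify the regularity claims ($W^{1,p}_{\mathrm{loc}}$, absolute continuity on lines, $\partial_z\phi\ne0$ a.e., homeomorphy) that upgrade the $L^p$-solution to a genuine quasiconformal homeomorphism.
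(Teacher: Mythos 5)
The paper does not prove this theorem: \pref{thm:MRMTglobal} is stated as a known result, with the reader referred to \cite{Ahlfors2006,Astala2008,Branner2014,Lehto1973,Hubbard2006} for proofs. So there is no ``paper's own proof'' to compare against; what you have written is an outline of the classical argument as it appears in, e.g., Ahlfors' lectures and Astala--Iwaniec--Martin, and in outline it is correct: reduce to compactly supported $\mu$, seek $\phi=z+\mathcal{C}h$, convert the Beltrami equation to $(I-\mu\mathcal{B})h=\mu$ on $L^p(\CC)$, invoke $\lVert\mathcal{B}\rVert_2=1$ and continuity of $p\mapsto\lVert\mathcal{B}\rVert_p$ to pick $p>2$ with $k\lVert\mathcal{B}\rVert_p<1$, sum the Neumann series, upgrade the $W^{1,p}_{\mathrm{loc}}$ solution to a homeomorphism, remove compact support by exhaustion or the $z\mapsto 1/\bar z$ reflection, and derive uniqueness from Weyl's lemma.

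One step in your write-up is stated more confidently than it deserves. You assert that $\partial_z\phi\neq 0$ a.e.\ ``because $\lvert\partial_{\bar z}\phi\rvert\le k\lvert\partial_z\phi\rvert$''; that inequality only says that $\partial_z\phi$ and $\partial_{\bar z}\phi$ vanish on the same set, it does not by itself rule out a common zero set of positive measure. The standard fix is either to prove first that $\phi$ is a homeomorphism (via the Stoïlow factorization and properness you invoke) and then appeal to the a.e.\ differentiability and positivity of the Jacobian for quasiconformal homeomorphisms, or to argue directly that a nonconstant $W^{1,p}_{\mathrm{loc}}$ solution of a Beltrami equation has $J_\phi>0$ a.e.\ (e.g.\ via the logarithmic derivative / exponential representation of $\partial_z\phi$). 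Similarly, the Stoïlow factorization for distributional $W^{1,p}_{\mathrm{loc}}$ solutions is itself a theorem requiring continuity, openness and discreteness; you cite it appropriately, but it is worth flagging that this is where the real analytic content lies, not the Neumann-series computation. With these caveats acknowledged, your proposal matches the standard proof that the paper is implicitly relying on.
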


For more details and versions, we refer to \cite{Ahlfors2006,Astala2008,Branner2014,Lehto1973,Hubbard2006}. From now on, unless otherwise stated, $\phi$ is said to be \textit{normalized} if $\phi(0)=0$ and $\phi(1)=1$. 

For our purposes, we need a generalized version of the MRMT which states that if a sequence of Beltrami coefficients converges pointwise a.e., then the sequence of integrating maps converges uniformly on compact subsets of $\CC$ to the QC map integrating the limiting Beltrami coefficient; see \cite[Lem.~5.3.5]{Astala2008}, and \cite{Ahlfors1960}. Here we adapt the statement to our fibered setting, with the analytic dependence on parameters \cite[Cor.~5.7.5]{Astala2008}.

\begin{theorem}[MRMT depending on parameters]
    \label{thm:MRMTfibers}
    Let $\{\mu_{\kappa,\theta}^{}\}_{\theta}^{}$ be a collection of Beltrami coefficients on $\CC$ such that $||\mu_{\kappa,\theta}||_\infty\leq k <1$ for $\kappa\in\DD$ and $\theta\in\TT^1$, and $\{\phi_{\kappa,\theta}^{}\}_\theta^{}$ be their normalized integrating maps. Then,
    \begin{enumerate}
        \item If $\theta\mapsto \mu_{\kappa,\theta}^{}(z)$ is continuous for a.e. $z\in\CC$, then $\theta\mapsto\phi_{\kappa,\theta}^{}(z)$ is continuous for each $z\in\CC$.
        \item If $\kappa\mapsto \mu_{\kappa,\theta}^{}(z)$ is holomorphic for every $z\in\CC$, then $\kappa\mapsto\phi_{\kappa,\theta}^{}(z)$ is holomorphic for each $z\in\CC$. 
    \end{enumerate}
\end{theorem}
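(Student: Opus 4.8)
Both statements are parametric refinements of \pref{thm:MRMTglobal}, and the plan is to deduce them from the classical stability and holomorphic-dependence results for the Beltrami equation, treating $\theta$ (in part~1) and $\kappa$ (in part~2) as the parameter. The structural fact to exploit throughout is the \emph{uniform} bound $\lVert\mu_{\kappa,\theta}\rVert_\infty\le k<1$ over all $(\kappa,\theta)$: it makes every $\phi_{\kappa,\theta}$ a $K$-quasiconformal homeomorphism with $K=(1+k)/(1-k)$ independent of $(\kappa,\theta)$, so that $\{\phi_{\kappa,\theta}\}$ is a normal family once the affine ambiguity left by \pref{thm:MRMTglobal} is removed by the two-point normalization $\phi(0)=0$, $\phi(1)=1$.

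For part~1, I would fix $\kappa\in\DD$, take an arbitrary sequence $\theta_n\to\theta_0$ in $\TT^1$, and use the hypothesis to produce a full-measure set $E\subset\CC$ on which $\theta\mapsto\mu_{\kappa,\theta}(z)$ is continuous; then $\mu_{\kappa,\theta_n}\to\mu_{\kappa,\theta_0}$ a.e., with all coefficients bounded by $k$. Ahlfors' convergence lemma for the Beltrami equation (see \cite[Lem.~5.3.5]{Astala2008} and \cite{Ahlfors1960}) applies: a.e. convergence of Beltrami coefficients uniformly bounded away from~$1$ forces the normalized integrating maps to converge locally uniformly on $\CC$, so in particular $\phi_{\kappa,\theta_n}(z)\to\phi_{\kappa,\theta_0}(z)$ for every $z\in\CC$. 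Since $\TT^1$ is metrizable, sequential continuity is continuity, which is exactly the assertion (and one in fact gets joint continuity of $(\theta,z)\mapsto\phi_{\kappa,\theta}(z)$ from the local uniformity).

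For part~2, I would fix $\theta\in\TT^1$ and first record that $\kappa\mapsto\mu_{\kappa,\theta}$ is locally bounded (by $k$) and that, by dominated convergence together with the holomorphy of $\kappa\mapsto\mu_{\kappa,\theta}(z)$ for each $z$, the functions $\kappa\mapsto\int_\CC\mu_{\kappa,\theta}(z)\,\psi(z)\,dz$ are holomorphic for every $\psi\in L^1(\CC)$; thus $\kappa\mapsto\mu_{\kappa,\theta}$ is a holomorphic family of Beltrami coefficients in the sense required by the Ahlfors--Bers theorem. Invoking the analytic-dependence form of the integrability theorem \cite[Cor.~5.7.5]{Astala2008} --- or, more explicitly, expanding the normalized solution $\phi_{\kappa,\theta}$ as a locally uniformly convergent Neumann series in $\mu_{\kappa,\theta}$ composed with the Beurling transform, each term of which is manifestly holomorphic in $\kappa$, and then evaluating at a point --- yields that $\kappa\mapsto\phi_{\kappa,\theta}(z)$ is holomorphic on $\DD$ for each fixed $z\in\CC$.

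The main (if modest) obstacle is simply to check that the \emph{pointwise-in-$z$} continuity/holomorphy hypotheses, as worded, are strong enough to feed the classical theorems, whose hypotheses are usually phrased in terms of a.e. convergence (part~1) or of holomorphy of the coefficient as an $L^\infty$-valued map (part~2). As indicated above, the uniform bound $k<1$ bridges this gap at once: there is no small-divisor issue and no compactness difficulty beyond the standard quasiconformal normal-family theory, so the proof reduces to citing the appropriate version of the MRMT and verifying its hypotheses.
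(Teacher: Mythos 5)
Your proposal is correct and follows exactly the route the paper takes: the paper does not prove \pref{thm:MRMTfibers} but simply cites \cite[Lem.~5.3.5]{Astala2008} and \cite{Ahlfors1960} for the a.e.-convergence statement (part~1) and \cite[Cor.~5.7.5]{Astala2008} for the holomorphic dependence (part~2), which is precisely what you invoke after reducing continuity in $\theta$ to sequential convergence of Beltrami coefficients and checking that pointwise-in-$z$ holomorphy together with the uniform bound $k<1$ puts $\kappa\mapsto\mu_{\kappa,\theta}$ in the hypotheses of the Ahlfors--Bers/Astala result. The only thing you add beyond the paper is the explicit bookkeeping (metrizability of $\TT^1$, dominated convergence to get weak holomorphy into $L^\infty$), which the paper leaves to the reader.
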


The goal of the quasiconformal surgery procedure in \pref{sec:3_ProofThmA} is to change the fibered multiplier of an attracting invariant curve of a QPF hyperbolic model map by means of the Integrability Theorem and, as in the one-dimensional case, a fibered analogue of the notion of invariant Beltrami coefficients will be crucial.

Recall that, given a QC map $f:U\to V$ (which is known to be absolutely continuous with respect to the Lebesgue measure) and a Beltrami coefficient $\mu$ in $V$, the \textit{pullback} of $\mu$ under $f$, denoted by $f^*\mu$ in $U$ (using the notation in \cite{Branner2014}), represents the field of infinitesimal ellipses (at a.e. $u\in U$) which are transported by the differential of $f$ to the corresponding field of infinitesimal ellipses (at $f(u)\in V$) described by $\mu$. Provided that $\mu$ is defined on an open set containing $U\cup V$, $\mu$ is called $f$-\textit{invariant} if $f^*\mu(u)=\mu(u)$ for a.e. $u\in U$.

In analogy, for a QPF map $F=(R_\alpha,f_\theta)$, we say that the collection $\{\mu_{\kappa,\theta}^{} \}_{\theta}^{}$ of Beltrami coefficients on $\CC$ (which we call a \textit{fibered Beltrami coefficient} on $\TT^1\times\CC$) is $F$-\textit{invariant} if, for all $\theta\in\TT^1$,
\begin{equation}
    \label{eq:Fpullback}
    f_{\theta}^{*} \hspace{0.4mm} \mu_{\kappa,\theta+\alpha}^{}(z) = \mu_{\kappa,\theta}^{}(z) \qquad \mbox{for \ a.e.} \ z\in\CC.
\end{equation}

\subsection{Holomorphic maps on Banach spaces}
\label{subsec:2_4_Banach}

Our goal in \pref{thm:B_QCapp} is to show that the fibered multiplier map $\widehat{\kappa}$ for the QPF quadratic family $Q_\lambda$, given by \pref{eq:MuliplierMapQ}, is a holomorphic submersion. Hence we need to introduce the following notion of holomorphicity in the framework of Banach spaces (see \cite[App.~A]{Poschel1987}), which plays a central role in modern functional analysis, to extend classical results and techniques from finite-dimensional spaces to infinite dimensions. Recall that a \textit{complex Banach space} is a complete normed vector space over $\CC$, as e.g. the $n$-dimensional Euclidean space $\CC^n$, the space $\mathcal{C}(X)$ of continuous complex-valued functions on a compact set $X$ (equipped with the sup-norm), or the space $\mathcal{L}(X,Y)$ of bounded linear maps between Banach spaces with the operator norm. 

\begin{definition}[Holomorphicity]
    \label{def:HolomorphicBanach}
    Let $(M,\lVert\cdot\rVert_M^{})$ and $(N,\lVert\cdot\rVert_N^{})$ be complex Banach spaces, and $U\subset M$ an open subset. A continuous map $F:U\to N$ is \textit{holomorphic} (or \textit{complex-analytic}) if $F$ is of class $\mathcal{C}^1$ on $U$, i.e. for each $x^*\in U$ there exists a bounded linear map $DF(x^*): M\to N$, the \textit{derivative} of $F$ at $x^*$, such that
    \begin{equation}
        \frac{\lVert F(x^*+h) - F(x^*) - DF(x^*) \ h \rVert_N^{}}{\lVert h \rVert_M^{}} \longrightarrow 0, \qquad \mbox{as} \qquad h\to 0,
    \end{equation}
    and its derivative $DF:U\to \mathcal{L}(M,N)$ is continuous. It is said to be \textit{weakly holomorphic} on $U$ if, for each $x^*\in U$, $v\in M$ and $L\in \mathcal{L}(N,\CC)$, the complex-valued function
    \begin{equation}
        t \mapsto L\circ F (x^*+t v)
    \end{equation}
    is holomorphic in some neighborhood of the origin (in the usual sense of one complex variable).
\end{definition}

Many properties of holomorphic mappings in Banach spaces (e.g. the Open Mapping Theorem, Liouville's Theorem or the Maximum Principle) can be derived from the corresponding ones for holomorphic functions of one complex variable, both having local power series expansions. This is mainly due to the following result based on the generalized Cauchy's formula for Banach spaces (see e.g. \cite[App.~A5]{Hubbard2006} or  \cite[App.~A]{Poschel1987}, and also \cite{Mujica1986} for more details). Remarkably, a weakly holomorphic map is indeed holomorphic if, in addition, it is \textit{locally bounded}, i.e. bounded in some neighborhood of each point of its domains of definition.

\begin{theorem}[Characterization of holomorphic maps]
    \label{thm:WeaklyHoloPower}
    Let $F:M\to N$ be a map between complex Banach spaces, and $U\subset M$ an open subset. Then the following are equivalent:
    \begin{enumerate}
        \item[1.] $F$ is holomorphic on $U$.
        \item[2.] $F$ is locally bounded and weakly holomorphic on $U$.
        \item[3.] $F$ is of class $\mathcal{C}^\infty$ on $U$, and is locally represented by its Taylor series. 
    \end{enumerate}
\end{theorem}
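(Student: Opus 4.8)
The plan is to prove the cycle of implications $(1)\Rightarrow(2)\Rightarrow(3)\Rightarrow(1)$, with all of the substance in $(2)\Rightarrow(3)$. The implication $(1)\Rightarrow(2)$ is immediate: a holomorphic map is of class $\mathcal{C}^1$, hence continuous, hence locally bounded; and for fixed $x^*\in U$, $v\in M$, $L\in\mathcal{L}(N,\CC)$ the chain rule gives that $t\mapsto L\circ F(x^*+tv)$ has the complex derivative $t\mapsto L\big(DF(x^*+tv)\,v\big)$ — here one uses precisely that $DF(x^*+tv)\in\mathcal{L}(M,N)$ and $L$ are $\CC$-linear — so this scalar function is holomorphic and $F$ is weakly holomorphic. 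The implication $(3)\Rightarrow(1)$ is equally short: if near each $x^*$ the map coincides with a locally uniformly convergent series $\sum_{n\ge 0}P_n(x-x^*)$ of continuous homogeneous polynomials $P_n$ of degree $n$, then $F$ is $\mathcal{C}^\infty$ by hypothesis, its Fréchet derivative at $x^*$ is the continuous linear map $P_1\in\mathcal{L}(M,N)$, and $DF$ is continuous (being itself smooth), so $F$ meets \pref{def:HolomorphicBanach}.

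For $(2)\Rightarrow(3)$ I would reduce to one complex variable by restricting to complex lines through $x^*$. Fix $x^*\in U$; by local boundedness pick $\rho>0$ with $\overline{B_M(x^*,\rho)}\subset U$, where $B_M(x^*,\rho):=\{x\in M:\lVert x-x^*\rVert<\rho\}$, and set $C:=\sup_{B_M(x^*,\rho)}\lVert F\rVert<\infty$. For $v\in M$ with $\lVert v\rVert\le\rho/2$ the function $\varphi_v(t):=F(x^*+tv)$ is $N$-valued, bounded by $C$ on the disc $\{|t|<2\}$, and each scalarization $L\circ\varphi_v$ is holomorphic there. The one genuinely infinite-dimensional input is that $\varphi_v$ is then itself norm-holomorphic: this is the vector-valued Cauchy integral formula — equivalently Dunford's theorem that a Banach-space-valued function of one complex variable all of whose scalarizations are holomorphic is norm-holomorphic — with the coefficient $P_n(v):=\varphi_v^{(n)}(0)/n!\in N$ produced concretely by $\tfrac{n!}{2\pi i}\oint_{|t|=1}\varphi_v(t)\,t^{-n-1}\,dt$, which lands in $N$ because its Riemann sums remain in a bounded multiple of the closed absolutely convex hull of the image of $\varphi_v$, a weakly compact (Krein--\v{S}mulian), hence weakly closed, set. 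Granting norm-holomorphy, the classical Cauchy estimates on $\{|t|<2\}$ together with routine homogeneity bookkeeping yield $\lVert P_n(v)\rVert\le C\,(\lVert v\rVert/\rho)^n$ for all $v\in M$; so each $P_n$ is a continuous homogeneous polynomial of degree $n$ (boundedness on a ball being the continuity criterion, with the bounded symmetric $n$-linear form supplied by polarization), and $\sum_{n\ge 0}P_n(v)$ converges absolutely, uniformly on $\lVert v\rVert\le r\rho$ for every $r<1$.

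It remains to identify the sum with $F$ and to extract smoothness. Applying any $L\in\mathcal{L}(N,\CC)$ and using that it commutes with the sum and with $\tfrac{d^{n}}{dt^{n}}$, the series $\sum_n L(P_n(v))$ is the Taylor series at $0$ of the holomorphic function $t\mapsto L(\varphi_v(t))$ evaluated at $t=1$, hence equals $L(F(x^*+v))$; since $\mathcal{L}(N,\CC)$ separates the points of $N$ (Hahn--Banach), we conclude $F(x^*+v)=\sum_{n\ge 0}P_n(v)$ for $\lVert v\rVert<\rho$. Finally a series of continuous homogeneous polynomials converging locally uniformly may be differentiated term by term (a Weierstrass-type argument via Cauchy estimates), so $F$ is of class $\mathcal{C}^\infty$ near $x^*$ and equal there to its Taylor series, which is $(3)$; this closes the cycle.

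I expect the main obstacle to be exactly the step promoting weak holomorphy of $\varphi_v$ (with local boundedness) to norm-holomorphy — this is where the infinite dimension of $M$ and $N$ ceases to be harmless and one must invoke genuine Banach-space machinery (the vector-valued Cauchy integral, Dunford's theorem, underpinned by Hahn--Banach and Krein--\v{S}mulian); everything else is a faithful transcription of the one-variable power-series theory, carried out along complex lines through $x^*$ and reassembled by Hahn--Banach. The details of this classical circle of ideas are in \cite[App.~A5]{Hubbard2006}, \cite[App.~A]{Poschel1987} and \cite{Mujica1986}. Combining the three characterizations then shows a posteriori that every holomorphic map between complex Banach spaces is automatically $\mathcal{C}^\infty$ and locally analytic, which is what we use freely in \pref{sec:4_App}.
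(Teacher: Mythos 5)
The paper does not supply its own proof of this theorem: it is stated as a background fact and deferred to \cite[App.~A5]{Hubbard2006}, \cite[App.~A]{Poschel1987}, and \cite{Mujica1986}. Your sketch is a faithful reproduction of the standard argument in those sources — the cycle $(1)\Rightarrow(2)\Rightarrow(3)\Rightarrow(1)$ with all the weight on $(2)\Rightarrow(3)$, reduction to complex lines through $x^*$, Dunford's theorem (or the weak-integral alternative you describe, using that weak continuity of $\varphi_v$ on the compact circle plus Krein--\v{S}mulian keeps the Cauchy--Riemann sums in a weakly compact convex set) to promote weak holomorphy of $\varphi_v$ to norm holomorphy, vector-valued Cauchy estimates for the coefficients, and reassembly via Hahn--Banach separation.

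There is one place where the sketch skips a genuine step. From the Cauchy coefficients $P_n(v):=\varphi_v^{(n)}(0)/n!$, homogeneity together with the bound $\lVert P_n(v)\rVert\le C(\lVert v\rVert/\rho)^n$ does not by itself make $P_n$ a homogeneous \emph{polynomial}: the map $v\mapsto\lVert v\rVert^n$ is homogeneous of degree $n$ and bounded on balls, yet it is not a polynomial for $n\ge 1$. Polarization produces the symmetric $n$-linear form $\mathcal{P}_n$ only once one already knows that $P_n$ restricted to finite-dimensional subspaces is a polynomial, and that fact is not free. The standard remedy, which you should make explicit, is to fix $v_1,\dots,v_n\in M$ and observe that $(t_1,\dots,t_n)\mapsto F(x^*+t_1v_1+\cdots+t_nv_n)$ is separately (weakly, hence by Dunford norm) holomorphic near $0\in\CC^n$ and locally bounded; by Osgood's lemma it is jointly holomorphic there, and its degree-$n$ homogeneous Taylor term is exactly $P_n(t_1v_1+\cdots+t_nv_n)$, which is therefore a genuine polynomial of degree $n$ in $(t_1,\dots,t_n)$. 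Only then does polarization yield the bounded symmetric $\mathcal{P}_n$ with $P_n(v)=\mathcal{P}_n(v,\dots,v)$. With that inserted, the rest of your sketch — the Cauchy estimates, locally uniform convergence, identification of the sum with $F$ by Hahn--Banach, and term-by-term differentiation to get $\mathcal{C}^\infty$ — is correct and matches the standard treatment that the paper is pointing to.
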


Recall that the \textit{Taylor series} of $F$ at any $x^*\in U$ is the unique power series $\sum_{k=0}^\infty P_k(x-x^*)$ that converges uniformly to $F$ in some neighborhood of $x^*$, where each $P_k$ is a \textit{homogeneous polynomial of degree $k$} on $M$, i.e. there exists a continuous $k$-linear symmetric map $\mathcal{P}_k:M^k\to N$ such that $P_k(x)=\mathcal{P}_k(x,\dots,x)$.

Holomorphic mappings are not only studied on open subsets of Banach spaces but on more general global objects known as \textit{complex Banach manifolds}, i.e. complex-analytic manifolds locally modelled on open subsets of complex Banach spaces (via biholomorphic coordinate transformations). Indeed, any open subset $U$ of a Banach space is a manifold with the global chart $(U,\mathrm{Id}_U)$, and most of differential calculus carries over such as the Implicit and Inverse Function Theorems (see more in \cite[App.~A5]{Hubbard2006} or \cite[App.~B]{Poschel1987}, as well as \cite{Fritzsche2002}). 

We are particularly interested in \textit{holomorphic submersions} between complex Banach manifolds, that is, holomorphic maps $F:\mathcal{M}\to \mathcal{N}$ satisfying that, for each $x\in\mathcal{M}$, there is a chart $(U,\varphi)$ at $x$ and $(V,\psi)$ at $F(x)$ such that $\varphi$ gives a biholomorphism of $U$ onto a product $U_1\times U_2$ (where $U_1$ and $U_2$ are open in some Banach spaces), and $F$ in these coordinates, namely $\psi\circ F\circ \varphi^{-1}$, becomes a \textit{projection} (onto its first factor) near $x$. This means that the map factors as the canonical projection $U_1\times U_2\to U_1$ followed by a biholomorphism of $U_1$ onto an open subset of $V$; see \cite[\S II.1]{Lang1995} and \cite[\S1.6]{Nag1988}. Observe that it is the generalization of the standard submersion from $\CC^m$ to $\CC^n$ (with $m\geq n$), given by the projection onto the first $n$ coordinates. 

As a consequence of the Inverse Function Theorem (see \cite[\S I.5]{Lang1995}), such a submersion is by definition \textit{split} in the sense that, for each $x\in\mathcal{M}$, the differential $dF(x)\in\mathcal{L}(T_x\mathcal{M},T_{F(x)}\mathcal{N})$ is a surjective map between tangent spaces, and its kernel splits (i.e. admits a closed complementary subspace in $T_x\mathcal{M}$; which is always the case in finite dimensions \cite[\S II.1]{Lang1995}). Furthermore, it provides a local description for submanifolds (perhaps defined by analytic equations) as follows; see e.g. \cite[App.~A5]{Hubbard2006} or \cite[\S1.6]{Nag1988} (and also \cite[\S IV.1]{Fritzsche2002}). 

\begin{theorem}[Holomorphic submersions]
    \label{thm:HoloSubmersions}
    Let $F:\mathcal{M}\to\mathcal{N}$ be a holomorphic map between complex Banach manifolds, $x^*\in \mathcal{M}$, and $y^*:=F(x^*)$. Then the following are equivalent:
    \begin{enumerate}
        \item $F$ is a holomorphic submersion at $x^*$.
        \item $F$ has a local holomorphic section at $y^*$.
        \item There exist a neighborhood $\mathcal{U}\subset \mathcal{M}$ of $x^*$, a neighborhood $\mathcal{V}\subset\mathcal{N}$ of $y^*$ with $F(\mathcal{U})\subset \mathcal{V}$, a complex Banach manifold $\mathcal{Z}$ and a holomorphic map $G:\mathcal{U}\to \mathcal{Z}$ such that $\widetilde{F}:=(F,G)$ defines a biholomorphic map from $\mathcal{U}$ to an open subset of $\mathcal{V}\times\mathcal{Z}$.
    \end{enumerate}
\end{theorem}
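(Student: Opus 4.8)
\textbf{Proof proposal for Theorem~\ref{thm:HoloSubmersions}.}

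The plan is to prove the cycle of implications $(1)\Rightarrow(3)\Rightarrow(2)\Rightarrow(1)$, reducing everything to the Banach-space Inverse Function Theorem and working in local charts so that we may assume from the outset that $\mathcal{M}$ is an open subset $U$ of a complex Banach space $M$, $\mathcal{N}$ is an open subset of a complex Banach space $N$, $x^*=0$, $y^*=F(0)=0$, and $DF(0):M\to N$ is the relevant linear map. Throughout, holomorphy of the maps produced will be checked via \pref{thm:WeaklyHoloPower}: they will be built by composing holomorphic maps and inverting biholomorphisms (whose local holomorphic inverse exists by the holomorphic Inverse Function Theorem in Banach spaces, which holds because the Neumann-series construction of the inverse of $I+$small is holomorphic), so their holomorphy is automatic.

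\emph{$(1)\Rightarrow(3)$.} By definition of a holomorphic submersion at $x^*$, after shrinking charts we may assume $M=M_1\times M_2$ with $U=U_1\times U_2$ open, and that $F$ in these coordinates is the canonical projection $U_1\times U_2\to U_1$ followed by a biholomorphism onto an open subset of $N$; composing with that biholomorphism we may simply take $N=M_1$ and $F(u_1,u_2)=u_1$. Then set $\mathcal{Z}:=M_2$ and $G:=$ the projection $U_1\times U_2\to U_2$. The map $\widetilde{F}=(F,G)$ is then literally the identity $U_1\times U_2\to U_1\times U_2$, hence a biholomorphism onto an open subset of $\mathcal{V}\times\mathcal{Z}$ with $\mathcal{V}=U_1$; this gives~(3).

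\emph{$(3)\Rightarrow(2)$.} Suppose $\widetilde{F}=(F,G):\mathcal{U}\to\mathcal{W}\subset\mathcal{V}\times\mathcal{Z}$ is a biholomorphism onto an open set. Pick $z^*:=G(x^*)\in\mathcal{Z}$. Since $\mathcal{W}$ is open and contains $(y^*,z^*)$, for $y$ near $y^*$ the point $(y,z^*)$ lies in $\mathcal{W}$, so we may define $s(y):=\widetilde{F}^{-1}(y,z^*)$; this is holomorphic as a composition of the holomorphic inclusion $y\mapsto(y,z^*)$ with the holomorphic inverse $\widetilde{F}^{-1}$. Because $\widetilde{F}=(F,G)$, applying $F$ gives $F(s(y))=y$ for all such $y$, and $s(y^*)=\widetilde{F}^{-1}(y^*,z^*)=x^*$. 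Thus $s$ is a local holomorphic section of $F$ at $y^*$, which is~(2).

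\emph{$(2)\Rightarrow(1)$.} Let $s:\mathcal{V}\to\mathcal{M}$ be a holomorphic section with $s(y^*)=x^*$ and $F\circ s=\mathrm{Id}$; differentiating at $y^*$ yields $DF(x^*)\circ Ds(y^*)=\mathrm{Id}_{T_{y^*}\mathcal{N}}$. Hence $DF(x^*)$ is surjective, and $P:=Ds(y^*)\circ DF(x^*)$ is a bounded idempotent on $T_{x^*}\mathcal{M}$ whose image is a closed complement to $E:=\ker DF(x^*)$ (indeed $T_{x^*}\mathcal{M}=E\oplus \mathrm{Im}\,P$ with $\mathrm{Im}\,P\cong T_{y^*}\mathcal{N}$ via $DF(x^*)$). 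Working in a chart, write $M=E\oplus E'$ with $E'=\mathrm{Im}\,P$; identify $E'\cong N$ through the isomorphism $DF(x^*)|_{E'}$, so we may assume $N=E'$ and $DF(0)$ is the projection $E\oplus E'\to E'$. Now define $\Theta:U\to E\times E'$ by $\Theta(v):=\big(\pi_E(v),\,F(v)\big)$, where $\pi_E$ is the projection onto $E$. This map is holomorphic, and its derivative at $0$ is $v\mapsto(\pi_E v, DF(0)v)$, which is the identity on $E\oplus E'$; by the holomorphic Inverse Function Theorem, $\Theta$ is a biholomorphism from a neighborhood of $0$ onto a neighborhood of $0$ in $E\times E'$, which after shrinking we may take to be a product $U_1\times U_2$ with $U_1$ open in $E$ and... (reindexing so $U_1$ plays the role of the $\mathcal{Z}$-factor and $U_2$ that of the target) in the coordinates given by $\Theta$ the map $F$ becomes the canonical projection onto the $E'$-factor. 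This exhibits $F$ as a holomorphic submersion at $x^*$, completing the cycle.

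\textbf{Main obstacle.} The delicate point is the splitting in $(2)\Rightarrow(1)$: in a general Banach space a surjective bounded operator need not have a closed complemented kernel, so the existence of the section is exactly what is needed to manufacture the bounded projection $P=Ds(y^*)\circ DF(x^*)$ and thereby the splitting $M=E\oplus E'$; once this is in hand, the chart-straightening via $\Theta$ and the holomorphic Inverse Function Theorem are routine. A secondary point, which should be dispatched with a sentence rather than a computation, is that every map produced (the inverse of $\widetilde{F}$, the inverse of $\Theta$, the section $s$, the compositions) is genuinely holomorphic in the Banach sense: this follows because the Inverse Function Theorem in the holomorphic category delivers a holomorphic inverse — the inverse is locally given by a uniformly convergent Neumann-type series in the perturbation — and compositions of holomorphic maps are holomorphic by \pref{thm:WeaklyHoloPower}.
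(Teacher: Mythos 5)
The paper does not prove this theorem but merely cites standard references (Hubbard, Nag, Fritzsche--Grauert); your proof supplies the standard argument found there, via the holomorphic Inverse Function Theorem on Banach spaces, and it is correct. The cycle $(1)\Rightarrow(3)\Rightarrow(2)\Rightarrow(1)$ is clean: $(1)\Rightarrow(3)$ is a chart-change; $(3)\Rightarrow(2)$ correctly extracts the section $s(y)=\widetilde{F}^{-1}(y,z^*)$; and in $(2)\Rightarrow(1)$ you correctly identify and resolve the only genuinely Banach-specific subtlety, namely that $\ker DF(x^*)$ must be shown to split, which you do by observing that $P:=Ds(y^*)\circ DF(x^*)$ is a bounded idempotent with $\ker P=\ker DF(x^*)$ and $\mathrm{Im}\,P=\mathrm{Im}\,Ds(y^*)$, so that $T_{x^*}\mathcal{M}=\ker P\oplus\mathrm{Im}\,P$; the straightening map $\Theta(v)=(\pi_E(v),F(v))$ then has invertible derivative (note $DF(0)|_{E'}$ is a bijection onto $N$ by the Open Mapping Theorem), and the holomorphic IFT finishes. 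Two minor cosmetic points you may wish to tidy: after applying $\Theta$ you should record explicitly that the resulting open set may be shrunk to a product $U_1\times U_2$ before declaring $F$ a projection, and the ``reindexing'' parenthetical is the only slightly awkward spot — it would read better to simply say that $F$ becomes the projection onto the $E'$-factor, and that after swapping the order of factors (a biholomorphism) this matches the paper's convention of projecting onto the first factor. Neither affects correctness.
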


Here by a local holomorphic \textit{section} for $F$ at $y^*$ we refer to a holomorphic \textit{right-inverse} of $F$ on some neighborhood $\mathcal{V}\subset \mathcal{N}$ of $y^*$, i.e. a holomorphic mapping $s:\mathcal{V}\to \mathcal{M}$ satisfying $s(y^*)=x^*$ and $F\circ s = \mathrm{Id}_{\mathcal{V}}$. Notice that the fiber $\mathcal{M}_{y^*}:=F^{-1}(\{y^*\})$ is a complex Banach submanifold of $\mathcal{M}$ of \textit{codimension} (defined as the dimension of the quotient $\mathcal{M}/\mathcal{M}_{y^*}$) equal to $n:=\dim\mathcal{N}\leq \dim\mathcal{M}$, since
\begin{equation}
    \mathcal{M}_{y^*}\cap \mathcal{U} = \widetilde{F}^{-1}\left(\{y^*\}\times\mathcal{Z}\right)\cap \mathcal{U}.
\end{equation}

Thus, such a holomorphic submersion ensures the existence of a \textit{holomorphic foliation} $\mathcal{F}$ of $\mathcal{M}$ of codimension $n$, that is, a partition of the complex manifold $\mathcal{M}$ into disjoint immersed submanifolds (called the \textit{leaves} of $\mathcal{F}$) of codimension $n$, locally given by the level sets $\{F=y^*\}$, $y^*\in\mathcal{N}$; see more details in \cite{Lyubich2024,LinsNeto2020}.

\section{Proof of Thm.~\ref{thm:A_QCqpf}}
\label{sec:3_ProofThmA} 

Consider a QPF hyperbolic polynomial $P=(\mathcal{R}_\alpha, p_\theta)$ of Diophantine frequency $\alpha$, with an attracting invariant curve $\gamma$ of index $m_0$ and fibered multiplier $\kappa_0\in\DD^*$. We will perform a surgery construction to get a QPF map $P_\kappa$ quasiconformally conjugate to $P$, with an invariant curve of a given fibered multiplier $\kappa\in\DD^*$. 

The idea of the proof is to define a collection of non-trivial Beltrami forms $\{\mu_{\kappa,\theta}\}_{\theta\in\TT^1}$ which are invariant under the fibered map $P$, in the sense of \pref{eq:Fpullback}. We shall later use the MRMT (see \pref{subsec:2_3_QC_MRMT}) to fiberwise integrate the corresponding complex structures, and conjugate $P$ by the resulting fibered integrating map to produce a new fibered holomorphic map with the desired properties. We now proceed to make this construction precise.

First, by \pref{thm:Konigs} there exists a fibered linearizing map $\Psi:\mathcal{T}(\gamma)\mapsto\TT^1\times\DD$, where $\mathcal{T}(\gamma)$ is an open invariant tube about $\gamma$, such that $P$ is conformally conjugate via $\Psi$ to the QPF linear contracting map
\begin{equation*}
    L(\theta,z):=(\theta+\alpha, \ \kappa_0 \hspace{0.2mm} e^{2\pi i m_0 \theta}\hspace{0.2mm} z).
\end{equation*}

Fix $\kappa\in\DD^*$, and write $\kappa=e^{\Lambda+2\pi i \rho}$ and $\kappa_0=e^{\Lambda_0+2\pi i \rho_0}$, where $\Lambda,\Lambda_0 < 0$ and $\rho,\rho_0 \in (\shortminus\frac{1}{2},\frac{1}{2}]$. Recall that our goal is to deliver a QPF polynomial $P_\kappa$ which is QC conjugate to $P$, with an attracting invariant curve $\gamma_\kappa$ of fibered multiplier $\kappa$ (and same index $m_0$ as $\gamma$, since the winding number is a topological invariant). Hence, the initial step is to find a (fibered) QC conjugacy between $L$ and
\begin{equation*}
    L_\kappa(\theta,z):=(\theta+\alpha, \ \kappa \hspace{0.2mm} e^{2\pi i m_0 \theta}\hspace{0.2mm} z).
\end{equation*} 
    
    \textup{\textbf{\textit{Step 1}.} \textit{Conjugating the linearized map $L$ to $L_\kappa$ via a fibered QC map. 
    }}

    \noindent\textup{For convenience we first find an explicit formula for it as a real-linear map on $\TT^1\times\HH_\ell$, where $\HH_\ell$ is the left half-plane. Denote by $\mathrm{Exp}(\theta,z):=(\theta,e^z)$ the \textit{fibered exponential map} (of period $2\pi i$), and let $\widetilde{\chi}^{},\widetilde{\chi}_\kappa^{}:\TT^1\to\HH_\ell$ be defined as
    \begin{equation*}
        \widetilde{\chi}^{}(\theta):=\Lambda_0+2\pi i (\rho_0+m_0\theta), \qquad \widetilde{\chi}_\kappa^{}(\theta):=\Lambda+2\pi i (\rho+m_0\theta),
    \end{equation*}
    so that $\chi_0^{}:=\widetilde{\chi}^{}(0)$ and $\chi_\kappa^{}:=\widetilde{\chi}_\kappa^{}(0)$, with imaginary parts in $(-\pi,\pi]$, satisfy that $e^{\chi_0^{}} = \kappa_0$, $e^{\chi_\kappa} = \kappa$. Let
    \begin{equation*}
        T_{\widetilde{\chi}}(\theta,\zeta) := \left( \theta+\alpha, \ \zeta+\widetilde{\chi}(\theta) \right),
    \end{equation*}
    be a QPF translation, and consider the fibered $\RR$-linear map $\widetilde{L}_\kappa:\TT^1\times\HH_\ell\to\TT^1\times\HH_\ell$ of the form
    \begin{equation*}
        \widetilde{L}_\kappa(\theta,\zeta) = \big(\theta, \widetilde{\ell}_{\kappa,\theta}^{}(\zeta)\big), \qquad \mbox{where} \qquad \widetilde{\ell}_{\kappa,\theta}(\zeta):= \widetilde{a}_\kappa(\theta) \zeta + \widetilde{b}_\kappa(\theta) \overline{\zeta}.
    \end{equation*}
    Here each $\widetilde{\ell}_{\kappa,\theta}^{}$ is normalized to send the basis $\{\widetilde{\chi}^{}(\theta),2\pi i\}$ onto the basis $\{\widetilde{\chi}_\kappa^{}(\theta),2\pi i\}$ (in analogy to the non-fibered example in \cite[\S~1.5]{Branner2014}), $\theta\in\TT^1$, so that the following diagram commutes:}
    \begin{figure}[H]
    \centering
    \begin{tikzcd}[column sep=40pt,row sep=35pt]
    \TT^1\times\HH_\ell \arrow[r,"T^{}_{\widetilde{\chi}^{}}"] \arrow[d,"\widetilde{L}_\kappa"'] &
    \TT^1\times\HH_\ell  \arrow[d,"\widetilde{L}_\kappa"]
    \\
    \TT^1\times\HH_\ell \arrow[r,"T^{}_{\widetilde{\chi}_\kappa^{}}"]  &
    \TT^1\times\HH_\ell 
    \end{tikzcd}
    \end{figure}

    \noindent\textup{It follows from this diagram ($\widetilde{L}_\kappa\circ T_{\widetilde{\chi}}=T_{\widetilde{\chi}_\kappa^{}} \circ \widetilde{L}_\kappa$), and the two normalization conditions on $\widetilde{L}_\kappa$ above, that
    \begin{align*}
        \widetilde{a}_\kappa(\theta+\alpha) &= \widetilde{a}_\kappa(\theta), &\qquad 
        \widetilde{a}_\kappa(\theta+\alpha)\widetilde{\chi}^{}(\theta) + \widetilde{b}_\kappa(\theta+\alpha) \overline{\widetilde{\chi}^{}(\theta)} &= \widetilde{\chi}_\kappa^{}(\theta), \\
        \widetilde{b}_\kappa(\theta+\alpha)&=\widetilde{b}_\kappa(\theta), & \widetilde{a}_\kappa(\theta)-\widetilde{b}_\kappa(\theta)&=1,
    \end{align*} 
    for all $\theta\in\TT^1$. These relations reveal that both coefficients of $\widetilde{\ell}_{\kappa,\theta}$ must be autonomous and equal to
    \begin{equation}
    \label{eq:Lcoefficients_ab}
    a_\kappa := \frac{\chi_\kappa^{}+\overline{\chi_0^{}}}{\chi_0^{}+\overline{\chi_0^{}}} = \frac{\mathrm{Log}\hspace{0.2mm}\kappa + \mathrm{Log}\hspace{0.2mm}\overline{\kappa_0^{}}}{2\hspace{0.2mm}\mathrm{Log}\hspace{0.2mm}|\kappa_0^{}|}, \qquad b_\kappa:=a_\kappa-1 = \frac{\mathrm{Log}\hspace{0.2mm}\kappa - \mathrm{Log}\hspace{0.2mm}\kappa_0^{}}{2\hspace{0.2mm}\mathrm{Log}\hspace{0.2mm}|\kappa_0^{}|},
    \end{equation} 
    respectively, where $\mathrm{Log}$ denotes the principal branch of the logarithm; say $\widetilde{L}_\kappa=(\mathrm{Id},\widetilde{\ell}_{\kappa})$ by dropping the subscript $\theta$, with $\widetilde{\ell}_\kappa(\zeta)=a_\kappa \zeta + b_\kappa \overline{\zeta}$. Recall that $\widetilde{\chi}(\theta)=\chi_0^{}+2\pi i m_0\theta$ and $\widetilde{\chi}_\kappa^{}(\theta)=\chi_\kappa^{}+2\pi i m_0\theta$.
    }
    
    \noindent\textup{Now by choosing the branch $\log$ that maps $\DD^*$ onto the half-strip $\left\{ t\hspace{0.2mm}\chi_0^{} + s \hspace{0.2mm}i: t>0, \hspace{0.2mm} s\in[0,2\pi) \right\}$, our $\RR$-linear map can be expressed on $\DD^*$ as
    \begin{equation*}
        \widetilde{\phi}_{\kappa}(z) := \exp \circ \hspace{0.4mm}\widetilde{\ell}_{\kappa} \circ \log (z) = z \hspace{0.3mm} e^{2b_\kappa\log|z|}.
    \end{equation*}
    Note that if $\rho=\rho_0$, i.e. $\kappa/\kappa_0=e^{\Lambda-\Lambda_0}$, and $K:=\Lambda_0/\Lambda\geq 1$, we obtain the standard $K$-QC \textit{radial stretching} $z\mapsto z |z|^{1/K-1}$. By construction, the fibered QC map $\widetilde{\Phi}_\kappa:=(\mathrm{Id}, \widetilde{\phi}_{\kappa})$ makes the following diagram commute:
    }
    \begin{figure}[H]
    \centering
    \begin{tikzcd}[column sep=40pt,row sep=35pt]
    \TT^1\times\DD^* \arrow[r,"L^{}"] \arrow[bend right=50, looseness=0.54,swap]{ddd}{\widetilde{\Phi}_\kappa} &
    \TT^1\times\DD^* \arrow[bend left=50, looseness=0.54]{ddd}{\widetilde{\Phi}_\kappa}
    \\
    \TT^1\times\HH_\ell \arrow[r,"T^{}_{\widetilde{\chi}^{}}"] \arrow[d,"\widetilde{L}_\kappa"'] \arrow[u,"\mathrm{Exp}"]  &
    \TT^1\times\HH_\ell  \arrow[d,"\widetilde{L}_\kappa"] \arrow[u,"\mathrm{Exp}"']
    \\
    \TT^1\times\HH_\ell \arrow[r,"T^{}_{\widetilde{\chi}_\kappa^{}}"] \arrow[d,"\mathrm{Exp}"'] &
    \TT^1\times\HH_\ell  \arrow[d,"\mathrm{Exp}"]
    \\
    \TT^1\times\DD^* \arrow[r,"L_\kappa"'] &
    \TT^1\times\DD^*
    \end{tikzcd}
    \end{figure}

    \textup{\textbf{\textit{Step 2}.} \textit{Defining a $P$-invariant collection of Beltrami coefficients on $\mathcal{A}(\gamma)$. 
    }}

    \noindent\textup{Given that the complex dilatation of the real-linear map $\widetilde{\ell}_{\kappa}$ is exactly $\sfrac{b_\kappa}{a_\kappa}$, we derive from this construction that each fiber map $\widetilde{\phi}_\kappa$ is a QC map fixing the origin and integrating the autonomous Beltrami coefficient 
    \begin{equation}
        \label{eq:AutonomousBeltrami2}
        \widetilde{\mu}_\kappa(z) := \frac{b_\kappa}{a_\kappa} \hspace{0.3mm}\frac{z}{\bar{z}}, \qquad \mbox{with} \qquad \widetilde{K}:=\frac{|a_\kappa|+|b_\kappa|}{|a_\kappa|-|b_\kappa|} = \max\left\{\frac{\Lambda}{\Lambda_0}, \frac{\Lambda_0}{\Lambda}\right\}
    \end{equation} 
    as its dilatation (i.e. the ratio of the major to the minor axis of the ellipses), setting $\widetilde{\mu}_\kappa(0)=0$. It follows from the diagram above that $L_\kappa = \widetilde{\Phi}_\kappa \circ L \circ \widetilde{\Phi}_\kappa^{-1}$ on $\TT^1\times\DD$, which is different from $L=(\mathcal{R}_\alpha,\ell_\theta)$ whenever $\kappa\neq\kappa_0$. Moreover, the collection $\{\widetilde{\mu}_\kappa\}_{\theta\in\TT^1}$ is $L$-invariant, i.e. $\ell_{\theta}^*\hspace{0.3mm}\widetilde{\mu}_\kappa = \widetilde{\mu}_\kappa$ for all $\theta$. Here we emphasize that, as indicated by dropping the subscript $\theta$, these Beltrami coefficients are the same for all fibers.
    }

    \noindent\textup{Recall that the basin of attraction $\mathcal{A}(\gamma)$ of the invariant curve $\gamma$ may be obtained by taking preimages of the open invariant tube $\mathcal{T}(\gamma)$ of $\gamma$ (see \pref{subsec:2_1_Konigs}) under the QPF polynomial $P$, that is, 
    \begin{equation*}
        \mathcal{A}(\gamma) = \bigcup_{n\in\NN} \mathcal{T}^n(\gamma), \qquad \mbox{where} \qquad \mathcal{T}^n(\gamma):=P^{-n}\left( \mathcal{T}(\gamma) \right).
    \end{equation*}
    Thus we can define recursively a new collection of Beltrami coefficients $\{\mu_{\kappa,\theta}\}_\theta^{}$ in $\CC$ by first pulling back $\{\widetilde{\mu}_\kappa\}_\theta^{}$ under the fibered linearizing map $\Psi=(\mathrm{Id},\psi_\theta):\mathcal{T}(\gamma)\to\TT^1\times\DD$, and subsequently spreading to the whole basin $\mathcal{A}(\gamma)$ by the dynamics of $P=(\mathcal{R}_\alpha, p_\theta)$ in the following sense (and setting $\mu_0\equiv 0$ elsewhere):
    \begin{equation}
        \label{eq:Spreading_Beltrami2}
        \mu_{\kappa,\theta} := \begin{cases}
            \psi_{\theta}^* \hspace{0.3mm} \widetilde{\mu}_{\kappa} & \mbox{on} \quad \mathcal{T}_\theta(\gamma) 
            \\
            (p_{\theta}^n)^* \hspace{0.3mm} \mu_{\kappa,\theta+n\alpha} & \mbox{on} \quad \mathcal{T}_\theta^n(\gamma)\hspace{0.3mm}\backslash \hspace{0.3mm} \mathcal{T}_\theta^{n-1}(\gamma), \ \mbox{  for every } n\geq 1,  \\
            \mu_0 & \mbox{on} \quad \CC\hspace{0.3mm}\backslash \hspace{0.3mm} \mathcal{A}_\theta(\gamma). 
        \end{cases}
    \end{equation}
    Here the subscript $\theta$ refers to the fiber at $\theta\in\TT^1$ of the corresponding set or map as usual, and $\mu_0$ denotes the standard Beltrami form (associated to a measurable field of circles). Hence, as all pullbacks are done under holomorphic maps, the fibered Beltrami coefficient $\{\mu_{\kappa,\theta} \}_\theta^{}$ is $P$-invariant by construction (see \pref{subsec:2_3_QC_MRMT}), and linked to almost complex structures over $\TT^1\times\CC$ with uniformly bounded dilatation $\widetilde{K}$, given by \pref{eq:AutonomousBeltrami2}.
    }

    \noindent\textup{Furthermore, given a simply-connected neighborhood $U_0$ of $\kappa_0 = e^{\chi_0^{}}$ in $\DD^*$, we choose the branch $\mathcal{L}:U_0\to\CC$ of the logarithm satisfying $\mathcal{L}(\kappa_0)=\chi_0^{}$ with $\mathrm{Im}\chi_0^{}\in (-\pi,\pi]$, so $\chi_\kappa^{}:=\mathcal{L}(\kappa)$ depends holomorphically on $\kappa$. 
    Then, since the coefficients $a_\kappa$ and $b_\kappa$, explicitly given by \pref{eq:Lcoefficients_ab}, are holomorphically depending on $\chi_\kappa^{}$, we deduce that the Beltrami form $\widetilde{\mu}_\kappa(z)=\frac{b_\kappa}{a_\kappa} \frac{z}{\overline{z}}$ varies holomorphically with respect to $\kappa\in U_0$. 
    }

    \noindent\textup{Now observe that the collection $\{\mu_{\kappa,\theta}\}_\theta^{}$ of Beltrami coefficients in $\CC$ was derived from $\widetilde{\mu}_\kappa$ by pulling back with the fibered linearizing map $\Psi:\mathcal{T}(\gamma)\to\TT^1\times\DD$, and spreading to all $\mathcal{A}(\gamma)$ by successive pullbacks via $P$, both being (fibered) holomorphic maps independent of $\kappa$. Since its extension by $\mu_{\kappa,\theta}(z)=0$ for $z\in\CC\hspace{0.3mm}\backslash\hspace{0.3mm}\mathcal{A}_\theta(\gamma)$ is clearly holomorphically dependent on $\kappa$, we conclude that for every $z\in\CC$, $\kappa\mapsto \mu_{\kappa,\theta}(z)$ is holomorphic in $\kappa\in U_0$. In addition, note that
    \begin{equation*}
        ||\mu_{\kappa,\theta}||_\infty = ||\widetilde{\mu}_{\kappa}||_\infty = \left| \frac{\mathcal{L}(\kappa)-\mathcal{L}(\kappa_0)}{\mathcal{L}(\kappa)+\overline{\mathcal{L}(\kappa_0)}}\right|,
    \end{equation*}
    which is a continuous real-valued function in $\kappa$ from $\overline{U_0}$ to $[0,1)$, so it is automatically bounded and its maximum value is attained, say $k<1$. We are ready to utilize the Integrability Theorem with parameters. \vspace{0.3cm}
    }

    \textup{\textbf{\textit{Step 3}.} \textit{Applying the MRMT to integrate $\{\mu_{\kappa,\theta}\}_\theta^{}$ fiberwise}.} 

    \noindent\textup{Notice that in \textit{Step 2} we produced a Beltrami coefficient $\mu_{\kappa,\theta}(z)$ on each $\CC_\theta$, defined recursively by \pref{eq:Spreading_Beltrami2}, with uniformly bounded dilatation in $\theta$, $\kappa$ and $z$. By the global MRMT (see \pref{thm:MRMTglobal}) there exists a unique integrating map $\phi_{\kappa,\theta}:\CC\to\CC$, normalized to fix $0$ and $1$, such that $\phi_{\kappa,\theta}^*\hspace{0.3mm}\mu_0 = \mu_{\kappa,\theta}$, or equivalently $\partial_{\bar{z}} \phi_{\kappa,\theta}=\mu_{\theta,\kappa} \partial_z \phi_{\kappa,\theta}$, a.e. in $\CC$. Observe that $\phi_{\kappa,\theta}$ is indeed conformal off $\mathcal{A}_\theta(\gamma)$ by Weyl's lemma. }
    
    \noindent\textup{For a.e. $z\in\CC$, we are going to show the continuity of $\theta\mapsto \mu_{\kappa,\theta}(z)$ at any given $\theta^*\in\TT^1$, by distinguishing three different cases. Recall that $\gamma$ is the invariant curve of $P$ under scrutiny, with fibered multiplier $\kappa_0\in\DD^*$.}
    
    \noindent\textup{\textit{Case 1: $z\in\mathcal{A}_{\theta^*}(\gamma)$}. \hspace{0.3mm} Let $N$ be the first iterate such that $\big(\theta^*+N\alpha,p_{\theta^*}^{N}(z)\big)\in \mathcal{T}(\gamma)$. Consider $\varepsilon>0$ so that the $\varepsilon$-tube $T^{\varepsilon}\big(\theta^*+N\alpha,p_{\theta^*}^{N}(z)\big):=\{\theta^*+N\alpha\}^\varepsilon \times \{ p_{\theta^*}^{N}(z) \}^\varepsilon\subset \mathcal{T}(\gamma)$. Since the basin of attraction of $\gamma$ is open and $P$ is uniformly continuous in $\overline{\mathcal{A}(\gamma)}$, there exists $T^{\delta_\varepsilon}(\theta^*,z)\subset\mathcal{A}(\gamma)$ for some $\delta_\varepsilon>0$, satisfying
    \begin{equation*}
        P^{N}\big( T^{\delta_\varepsilon}(\theta^*,z) \big) \subset \mathcal{T}(\gamma). 
    \end{equation*}
    Hence, for any sequence $\{\theta_n\}_{n\in\NN}\subset \TT^1$ such that $\theta_n\to\theta^*$ as $n\to\infty$, it is clear from the recursive definition in \textit{Step 2}, together with the continuity of $p_\theta^N$ and $\psi_\theta$ in $\theta$, that $\mu_{\kappa,\theta_n}^{}(z)$ converges to $\mu_{\kappa,\theta^*}^{}(z)$
    as $n\to\infty$. 
    } 

    \noindent\textup{\textit{Case 2: $z\in\partial\mathcal{A}_{\theta^*}(\gamma)$}. \hspace{0.3mm} Recall that the hyperbolicity assumption implies that every fiber of the Julia set $\mathcal{J}=\partial\mathcal{K}$ for the QPF polynomial $P$ has zero Lebesgue measure (see \pref{prop:JuliaMeas}). Therefore, as $\partial\mathcal{A}_{\theta^*}(\gamma)\subset \mathcal{J}_{\theta^*}$, this situation does not require further analysis.}

    \noindent\textup{\textit{Case 3: $z\in\CC\hspace{0.2mm}\backslash\hspace{0.2mm}\overline{\mathcal{A}_{\theta^*}(\gamma)}$}. \hspace{0.3mm} Since $\overline{\mathcal{A}(\gamma)}^{\hspace{0.2mm}c}$ is an open set in $\TT^1\times\CC$, there is some $\delta>0$ such that $T^{\delta}(\theta^*,z)\subset \overline{\mathcal{A}(\gamma)}^{\hspace{0.2mm}c}$. Thus, the sequential continuity of $\theta\mapsto\mu_{\kappa,\theta}(z)$ at $\theta^*$ is here  straightforward, given that in \textit{Step 2} it was defined to be $\mu_0 \equiv 0$ at every point outside of the basin of attraction of $\gamma$.}

    \noindent\textup{In conclusion, the continuity of $\theta\mapsto \mu_{\kappa,\theta}(z)$ is guaranteed for a.e. $z\in\CC$, and $\kappa\mapsto \mu_{\kappa,\theta}(z)$ is holomorphic for every $z$, with $||\mu_{\kappa,\theta}||_\infty=k<1$ for all $\theta$ and $\kappa\in U_0$, as shown at the end of \textit{Step 2}. Hence, \pref{thm:MRMTfibers} delivers a fibered QC map $\Phi_\kappa=(\mathrm{Id},\phi_{\kappa,\theta})$ on $\TT^1\times\CC$ which is continuous in $\theta$ and holomorphic in $\kappa$, normalized in the standard way and fiberwise integrating the ($P$-invariant) collection of Beltrami forms $\{\mu_{\kappa,\theta}\}_\theta^{}$. \vspace{0.3cm}
    }
    
    \textup{\textbf{\textit{Step 4}.} \textit{Determining the new polynomial $P_\kappa$ with an invariant curve of fibered multiplier $\kappa$.}}

    \noindent\textup{First we see that
    \begin{equation*}
        P_\kappa(\theta,z) := \Phi_\kappa\circ P \circ \Phi_\kappa^{-1}(\theta,z) = \big(\theta+\alpha, \ \phi_{\kappa,\theta+\alpha}^{} \circ p_{\kappa,\theta}^{} \circ \phi_{\kappa,\theta}^{-1}(z)\big) 
    \end{equation*}
    is a QPF polynomial map of the same degree as $P$ so that, for any $\theta\in\TT^1$, the following diagram commutes:
    }
    \begin{figure}[H]
    \centering
    \begin{tikzcd}[column sep=30pt,row sep=30pt]
    (\CC, \ \mu_{\kappa,\theta}^{}) \arrow[r,"p_{\theta}^{}"] \arrow[d,"\phi_{\kappa,\theta}^{}"'] &
    (\CC,\ \mu_{\kappa,\theta+\alpha}) \arrow[d,"\phi_{\kappa,\theta+\alpha}^{}"]
    \\
    (\CC,\ \mu_{0}^{}) \arrow[r,"p_{\kappa,\theta}^{}"'] &
    (\CC,\ \mu_{0}^{})
    \end{tikzcd}
    \end{figure}

    \noindent\textup{This follows from the fact that each fiber map $p_{\kappa,\theta}$ in our construction is quasiconformal and preserves the standard complex structure, and so it is holomorphic by Weyl's lemma (which is actually the reason for building a $P$-invariant fibered Beltrami coefficient in \textit{Step 2}). Since $p_\theta^{}$ is a polynomial and each integrating map $\phi_{\kappa,\theta}$ is a homeomorphism on $\CC$, we deduce that $p_{\kappa,\theta}$ is also a polynomial (of the same degree). 
    }
    
    \noindent\textup{In order to show that $\Phi_\kappa$ is the fibered QC map we were looking for, we must verify that $\gamma_\kappa(\theta):=\phi_{\kappa,\theta}(\gamma(\theta))$ is an invariant curve of $P_\kappa$ with fibered multiplier $\kappa$. As $\gamma$ is invariant under $P$, it is clear from the commutative diagram above that $\gamma_\kappa$ is the corresponding invariant curve of $P_\kappa$.
    }
    
    \noindent\textup{For this purpose, we define the fibered map $\Psi_\kappa=(\mathrm{Id},\psi_{\kappa,\theta}^{}):\Phi_\kappa\left(\mathcal{T}(\gamma)\right)\to\TT^1\times\DD$ by
    \begin{equation*}
        \psi_{\kappa,\theta}^{}(z) := \widetilde{\phi}_\kappa\circ\psi_\theta^{}\circ \phi_{\kappa,\theta}^{-1}(z),
    \end{equation*}
    so that $\psi_{\kappa,\theta}^*\vspace{0.3mm}\mu_0=\mu_0$ for all $\theta\in\TT^1$, given that $\widetilde{\Phi}_\kappa$ and $\Phi_\kappa$ integrate $\{\widetilde{\mu}_{\kappa}\}_\theta$ and $\{\mu_{\kappa,\theta}\}_\theta$ fiberwise, respectively, and $\psi_{\theta}^*\vspace{0.3mm}\widetilde{\mu}_\kappa=\mu_{\kappa,\theta}$ by definition; see \pref{eq:Spreading_Beltrami2}. Recall that $\Psi=(\mathrm{Id},\psi_\theta^{})$ refers to the linearizing map on the invariant tube $\mathcal{T}(\gamma)$ of $\gamma$, and $\widetilde{\Phi}_\kappa=(\mathrm{Id},\widetilde{\phi}_{\kappa})$ is the (autonomous) QC map on $\TT^1\times\DD$ conjugating the linearized dynamics of $P$ and $P_\kappa$ about $\gamma$ and $\gamma_\kappa$, respectively, as described in \textit{Step 1}.
    }
    
    \noindent\textup{It follows from Weyl's lemma that $\Psi_\kappa$ is a fibered conformal map and, as it conjugates $P_\kappa$ about $\gamma_\kappa$ to the QPF linear map $L_\kappa(\theta,z)=(\theta+\alpha, \kappa e^{2\pi i m_0 \theta}z)$ about $\TT^1\times\{0\}$, we conclude that $\Psi_\kappa$ is indeed the linearizing map about $\gamma_\kappa$, i.e. $\kappa$ is the fibered multiplier of $\gamma_\kappa$. Note that we can consider the tubular neighborhood $\Phi_\kappa(\mathcal{T}(\gamma))$ as the open invariant tube $\mathcal{T}(\gamma_\kappa)$ of $\gamma_\kappa$.
    }
    
    \noindent\textup{We summarize our QC surgery construction in the following commutative diagram, where the fiber $\theta$ is preserved (resp. sent to $\theta+\alpha$) via  fibered maps along the vertical (resp. horizontal) direction: 
    }

    \begin{figure}[H]
    \centering
    \begin{tikzcd}[column sep=40pt,row sep=35pt]
    \big(\mathcal{T}(\gamma_\kappa), \ \{\mu_0^{}\}_\theta^{}\big) \arrow[r,"P_{\kappa}"] \arrow[bend right=63, looseness=0.6,swap]{ddd}{\Psi_{\kappa}} &
    \big(\mathcal{T}(\gamma_\kappa),\ \{\mu_0^{}\}_\theta^{}\big) \arrow[bend left=63, looseness=0.6]{ddd}{\Psi_{\kappa}}
    \\
    \big(\mathcal{T}(\gamma), \ \{\mu_{\kappa,\theta}^{}\}_\theta^{} \big) \arrow[r,"P"] \arrow[d,"\Psi"'] \arrow[u,"\Phi_\kappa"]  &
    \big(\mathcal{T}(\gamma), \ \{\mu_{\kappa,\theta}^{}\}_\theta^{} \big)  \arrow[d,"\Psi"] \arrow[u,"\Phi_\kappa"']
    \\
    \big(\TT^1\times\DD, \ \{\widetilde{\mu}_\kappa^{}\}_\theta^{}\big) \arrow[r,"L"] \arrow[d,"\widetilde{\Phi}_\kappa"'] &
    \big(\TT^1\times\DD, \ \{\widetilde{\mu}_\kappa^{}\}_\theta^{}\big)  \arrow[d,"\widetilde{\Phi}_\kappa"]
    \\
    \big(\TT^1\times\DD, \ \{\mu_0^{}\}_\theta^{}\big) \arrow[r,"L_\kappa"] &
    \big(\TT^1\times\DD, \ \{\mu_0^{}\}_\theta^{}\big)
    \end{tikzcd}
    \label{fig:QCsummary}
    \end{figure}

    \noindent\textup{Last, but not least important, we aim to show the holomorphic dependence of $P_\kappa=(\mathcal{R}_\alpha,p_{\kappa,\theta})$ on $\kappa$, in analogy to the non-fibered derivation in \cite[Lem. 1.40]{Branner2014}. Fix $\theta\in\TT^1$ and consider the given simply-connected neighborhood $U_0$ of $\kappa_0$ in $\DD^*$. From the commutative diagram above we have: $p_{\kappa,\theta}\circ \phi_{\kappa,\theta}(z) = \phi_{\kappa,\theta+\alpha} \circ p_{\theta}(z)$, for any $\kappa\in  U_0$ and $z\in\CC$.
    In order to stress the (holomorphic or antiholomorphic) dependence on both the parameter and the point, i.e. on the variables $(\kappa,\overline{\kappa},z,\overline{z})$, it may be written as 
    \begin{equation*}
        p_{\kappa,\theta}(\kappa,\overline{\kappa},\phi_{\kappa,\theta}(z),\overline{\phi_{\kappa,\theta}(z)}) = \phi_{\kappa,\theta+\alpha}(\kappa,\overline{\kappa},p_{\theta}(z),\overline{p_{\theta}(z)}).
    \end{equation*}}

    \noindent\textup{By differentiating both sides with respect to $\overline{\kappa}$ and applying the chain rule in the sense of distributions (as QC mappings can be approximated by $\mathcal{C}^{\infty}$-functions; see \cite[Cor. 5.5.8]{Astala2008}), we obtain that
    \begin{equation*}
        \frac{\partial\hspace{0.2mm} p_{\kappa,\theta}}{\partial \overline{\kappa}}\bigg|_{\phi_{\kappa,\theta}(z)} + \quad \frac{\partial \hspace{0.2mm} p_{\kappa,\theta}}{\partial z}\bigg|_{\phi_{\kappa,\theta}(z)} \cdot \ \frac{\partial\hspace{0.2mm} \phi_{\kappa,\theta}}{\partial \overline{\kappa}}\bigg|_{z} \quad + \quad \frac{\partial\hspace{0.2mm} p_{\kappa,\theta}}{\partial \overline{z}}\bigg|_{\phi_{\kappa,\theta}(z)} \cdot \ \frac{\partial \hspace{0.2mm}\overline{\phi_{\kappa,\theta}}}{\partial \overline{\kappa}}\bigg|_{z} \quad = \quad \frac{\partial \hspace{0.2mm}\phi_{\kappa,\theta+\alpha}}{\partial \overline{\kappa}}\bigg|_{p_{\theta}(z)}.
    \end{equation*}
    Therefore, since the polynomial $p_{\kappa,\theta}$ is holomorphic in $z\in\CC$ and the integrating map $\phi_{\kappa,\theta}$ is holomorphic in $\kappa\in U_0$ due to the MRMT depending on parameters (see conclusions of \textit{Steps 2} and \textit{3}), i.e. $\partial_{\bar{z}} p_{\kappa,\theta}\equiv 0$ and $\partial_{\bar{\kappa}} \phi_{\kappa,\theta}\equiv 0$, we see that $\frac{\partial\hspace{0.2mm} p_{\kappa,\theta}}{\partial \bar{\kappa}}\big|_{\phi_{\kappa,\theta}(z)}=0$, and so $p_{\kappa,\theta}$ varies holomorphically with respect to $\kappa\in U_0$.}

\begin{remark}[On the hyperbolicity assumption]
    \label{rem:localQCconj}
    Notice that the hypothesis of hyperbolicity of the QPF polynomial $P$ is the one that allows to give the fibered QC map $\Phi_\kappa$ as a global conjugacy between $P$ and $P_\kappa$, changing the fibered multiplier of the attracting invariant curve $\gamma$. In particular, it implies that the fibered Julia set of $P$ has zero-Lebesgue measure, so that \pref{thm:MRMTfibers} (MRMT depending on parameters) applies to bring $\Phi_\kappa$ as a continuous map in $\theta$ (see \textup{Case 2} in \textit{Step 3}), and thus a QPF map $P_\kappa$ at the end of the process. Observe that, even in the autonomous non-hyperbolic case, the Julia set needs not be of zero-measure \cite{Buff2012}.

    \noindent Nevertheless, without this assumption, although $\partial\mathcal{A}(\gamma)$ might present points of discontinuity for $\theta\mapsto \mu_{\kappa,\theta}^{}(z)$, the above surgery construction still provides a locally-defined fibered QC map to modify the fibered multiplier of an attracting invariant curve $\gamma$ for a given QPF holomorphic map (not necessarily a hyperbolic polynomial).
\end{remark}

\section{Quadratic family and Proof of Thm.~\ref{thm:B_QCapp}}
\label{sec:4_App}

In this final section we apply our QC surgery construction to change the fibered multiplier of a given attracting invariant curve for QPF hyperbolic polynomials within a generic one-parameter quadratic family.

Following the work of Sester \cite[\S2]{Sester1999} on fibered polynomials (over a continuous function on a given compact set), any QPF polynomial, of degree $d\geq 2$, is known to be affinely conjugate to a QPF unitary centered polynomial (of the same degree), reducing the parameter space to  $\mathcal{C}^*(\TT^1)\times \mathcal{C}(\TT^1)^{d-1}$. In general there may be a homotopic obstruction to make the fiber maps monic, depending on the degree of the polynomial and the homotopy class of its leading coefficient as a loop in $\CC^*$ (represented by its winding number about $0$); see \cite[Prop. 2.1, 2.2]{Sester1999}. However, it can be seen that there is no such an obstacle in the quadratic case.
    
As mentioned in \pref{sec:1_Intro}, here we choose to study the following parametrization of QPF quadratic polynomials:
\begin{equation}
    \label{eq:QuadraticS4}
    Q_\lambda(\theta,z) = \left(\theta+\alpha, \ z^2 + \lambda(\theta) z\right), \qquad \lambda\in\mathcal{C}^*(\TT^1),
\end{equation}
so that $\gamma\equiv 0$ is always an invariant curve, and the fiber maps $q_{\lambda,\theta}^{}:=\Pi_2\circ Q_\lambda$ satisfy that $q_{\lambda,\theta}'(0) = \lambda(\theta)$. Observe that $Q_\lambda$ is conjugate to the standard QPF quadratic map 
\begin{equation}
    \label{eq:StandardQuadratic}
    F_c(\theta,z):= \left(\theta+\alpha, \ z^2+c(\theta)\right), \qquad \mbox{with} \qquad c(\theta)=\lambda(\theta+\alpha)/2-\lambda(\theta)^2/4,
\end{equation}
via the fibered affine map $H(\theta,z)=\left(\theta, z+\lambda(\theta)/2\right)$ which sends the critical curve of $Q_\lambda$ to the zero-section.

Sester \cite[\S5]{Sester1999} described the \textit{connectedness locus} $\mathcal{M}_c(\TT^1)$ for the quadratic family $F_c$, i.e. those parameters $c\in\mathcal{C}(\TT^1)$ such that the fiber $\mathcal{K}_{c,\theta}$ (of the filled-in Julia set $\mathcal{K}_c$ of $F_c$) is connected for all $\theta$, showing first that
\begin{equation}
    \mathcal{M}_c(\TT^1)\subset \{ c: |c(\theta)|\leq 2 \}, \qquad \mbox{and} \qquad \mathcal{K}_{c,\theta}\subset \overline{B_2(0)}. 
\end{equation}
The \textit{generalized main cardioid} $\mathcal{M}_{c,0}(\TT^1)$ was defined as the subset of $\mathcal{M}_c(\TT^1)$ associated to hyperbolic polynomials $F_c$ such that $\mathrm{int}\hspace{0.3mm}\mathcal{K}_{c,\theta}$ is connected and non-empty for all $\theta$. As consequence of the characterization in \cite[Thm.~5.2]{Sester1999}, one obtains that $\mathcal{M}_{c,0}(\TT^1)$ is an open set in $\mathcal{C}(\TT^1)$ associated to QPF polynomials with a unique attracting invariant curve, including the parameters $\{c:|c(\theta)|<1/4\}$. 

Remarkably, as in the one-dimensional case, this main cardioid turns out to be in correspondence with those QPF quadratic polynomials for which each fiber $\mathcal{K}_{c,\theta}$ is a \textit{$k$-quasidisk} (with $k$ independent of $\theta$), i.e. the image of $\DD$ under a $k$-quasiconformal map; see more details in \cite[\S5]{Sester1999}. Denoting by $\mathcal{M}_0(\TT^1)$ the corresponding generalized main cardioid for the family $\mathcal{Q}_\lambda$, i.e. the hyperbolic polynomials $Q_\lambda$ such that all fibers of its filled-in Julia set $\mathcal{K}_\lambda$ have non-empty connected interior, these results can be restated as follows due to the dynamical relation between $F_c$ and $Q_\lambda$ (see also the role of critical curves in \pref{subsec:2_2_QPFpolyn}); indeed $\mathcal{K}_c=H(\mathcal{K}_\lambda)$.

\begin{lemma}[Main cardioid]
    \label{lem:Qfamily}
    Consider the family $\mathcal{Q}_\lambda$ of QPF quadratic polynomials in \pref{eq:QuadraticS4}, and let $\mathcal{M}_0(\TT^1)$ be described as above. If $\lambda\in\mathcal{M}_0(\TT^1)$, then $Q_\lambda$ has a unique attracting invariant curve $\gamma$, and its basin of attraction $\mathcal{A}(\gamma)$ is a connected set, containing the only critical curve $\omega(\theta):=-\lambda(\theta)/2$. Moreover, the fibers of $\mathcal{A}(\gamma)$ are quasidisks with
    \begin{equation}
        \mathcal{A}_\theta(\gamma)\subset \overline{B_2\left(\omega(\theta)\right)}.
    \end{equation}
\end{lemma}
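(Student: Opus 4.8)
The plan is to transport Sester's description of the generalized main cardioid $\mathcal{M}_{c,0}(\TT^1)$ for the standard family $F_c$ over to the family $\mathcal{Q}_\lambda$ by means of the fibered affine conjugacy $H(\theta,z)=(\theta,z+\lambda(\theta)/2)$. First I would record the structural consequences of this conjugacy. By \eqref{eq:StandardQuadratic} one has $H\circ Q_\lambda=F_c\circ H$ with $c(\theta)=\lambda(\theta+\alpha)/2-\lambda(\theta)^2/4$; since $H$ is fiberwise a translation it is a degree-preserving fibered conformal homeomorphism of $\TT^1\times\CC$, hence it carries $\mathcal{K}_\lambda$ onto $\mathcal{K}_c$, $\mathcal{J}_\lambda$ onto $\mathcal{J}_c$, $\mathrm{Crit}(Q_\lambda)$ onto the zero-section $\mathrm{Crit}(F_c)$, preserves hyperbolicity (it conjugates postcritical sets and Julia sets, cf.\ \eqref{eq:PostCritical}), and sends attracting invariant curves to attracting invariant curves (the Lyapunov exponent, hence $|\kappa|$, is preserved by a fibered conformal conjugacy of zero index with $\nu=0$, cf.\ \pref{lem:InvMultiplier}). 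In particular, because $\mathcal{K}_c=H(\mathcal{K}_\lambda)$ and $H$ is a fiberwise homeomorphism, $\lambda\in\mathcal{M}_0(\TT^1)$ if and only if $c\in\mathcal{M}_{c,0}(\TT^1)$.

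Next I would invoke Sester's characterization of $\mathcal{M}_{c,0}(\TT^1)$ in \cite[Thm.~5.2 and \S5]{Sester1999}: when $c\in\mathcal{M}_{c,0}(\TT^1)$, the map $F_c$ is a QPF hyperbolic polynomial with a unique attracting invariant curve $\widetilde\gamma$, every fiber $\mathcal{K}_{c,\theta}$ is a $k$-quasidisk with $k$ independent of $\theta$ (equivalently, so is its interior), and $\mathcal{K}_{c,\theta}\subset\overline{B_2(0)}$. To these I would add the identification $\mathrm{int}\,\mathcal{K}_{c,\theta}=\mathcal{A}_\theta(\widetilde\gamma)$, valid for every $\theta$ in this hyperbolic single-attractor situation: a point of bounded orbit not lying on $\mathcal{J}_c=\partial\mathcal{K}_c$ has a neighbourhood on which $\{f_{c,\theta}^n\}$ is normal (cf.\ \pref{prop:JuliaMeas} and the remark following it), and hyperbolicity precludes any indifferent behaviour, so its orbit must converge to an attracting invariant curve, necessarily $\widetilde\gamma$, the only one. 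Conjugating back by $H$ then gives $\gamma(\theta):=\widetilde\gamma(\theta)-\lambda(\theta)/2$ as the unique attracting invariant curve of $Q_\lambda$, with $\mathcal{A}_\theta(\gamma)=\mathrm{int}\,\mathcal{K}_{\lambda,\theta}$ a non-empty $k$-quasidisk for each $\theta$ (the quasiconformal constant is unchanged, as the fiber translations are conformal) contained in $\overline{B_2(0)}-\lambda(\theta)/2=\overline{B_2(\omega(\theta))}$.

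It remains to place the critical curve and to upgrade fiberwise connectedness to connectedness of the whole basin. Since $q_{\lambda,\theta}'(z)=2z+\lambda(\theta)$ vanishes only at $\omega(\theta)=-\lambda(\theta)/2$, the set $\mathrm{Crit}(Q_\lambda)$ is exactly the single curve $\omega$, and $H$ maps it to $\mathrm{Crit}(F_c)=\TT^1\times\{0\}$. As all fibers of $\mathcal{K}_c$ are connected, $\mathrm{Crit}(F_c)\subset\mathcal{K}_c$ (by the equivalence recalled in \pref{subsec:2_2_QPFpolyn}), while $\mathrm{Crit}(F_c)\cap\mathcal{J}_c=\emptyset$ by hyperbolicity \eqref{eq:PostCritical}; hence $\mathrm{Crit}(F_c)\subset\mathrm{int}\,\mathcal{K}_c=\mathcal{A}(\widetilde\gamma)$, and pulling back by $H$ gives $\omega(\theta)\in\mathcal{A}_\theta(\gamma)$ for every $\theta$. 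For connectedness of $\mathcal{A}(\gamma)$ as a subset of $\TT^1\times\CC$: each slice $\{\theta\}\times\mathcal{A}_\theta(\gamma)$ is connected and meets the graph $\{(\theta,\gamma(\theta)):\theta\in\TT^1\}$, which is connected as a continuous image of $\TT^1$; a union of connected sets all meeting one common connected set is connected, so $\mathcal{A}(\gamma)=\bigsqcup_{\theta}\{\theta\}\times\mathcal{A}_\theta(\gamma)$ is connected.

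I expect the only non-formal point to be the identification $\mathrm{int}\,\mathcal{K}_{c,\theta}=\mathcal{A}_\theta(\widetilde\gamma)$, i.e.\ confirming that hyperbolicity together with uniqueness of the attracting invariant curve forces the interior of the filled Julia set to coincide with that basin, with no other bounded components of normality; this is precisely where one leans on the fibered Fatou--Julia structure theory of \cite{Sester1997phd,Sester1999}, everything else being a routine change of coordinates.
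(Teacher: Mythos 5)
Your proof is correct and follows the approach the paper itself intends: the paper presents \pref{lem:Qfamily} without a detailed proof, asserting it as a direct restatement of Sester's description of $\mathcal{M}_{c,0}(\TT^1)$ transported to the $Q_\lambda$-family via the fibered affine conjugacy $H(\theta,z)=(\theta,z+\lambda(\theta)/2)$, which is exactly what you carry out. Your identification $\mathrm{int}\,\mathcal{K}_{c,\theta}=\mathcal{A}_\theta(\widetilde\gamma)$ and the connectedness-via-common-graph argument are the details the paper leaves implicit, and both are sound.
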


Notice that $\mathcal{H}_0^*(\TT^1)$, the complex Banach manifold which is of our interest, defined by \pref{eq:MandelbrotQ0} as an open subset of $\mathcal{M}_0^*(\TT^1):=\mathcal{M}_0(\TT^1)\cap\mathcal{C}^*(\TT^1)$, corresponds to those QPF hyperbolic polynomials $Q_\lambda$ such that $\gamma\equiv 0$ is its unique attracting invariant curve, of index $m(\gamma) = \mathrm{wind}(\lambda(\theta),0)=0$ and Lyapunov exponent $\Lambda(\gamma) = \int_{\TT^1} \log|\lambda(\theta)|d\theta < 0$, with $\omega(\TT^1)\subset \mathcal{A}(\gamma)$.

In this setting we finally implement our QC surgery procedure (described in \pref{sec:3_ProofThmA}) to change the fibered multiplier of $\gamma\equiv 0$ for QPF hyperbolic polynomials within the quadratic family $\mathcal{Q}_\lambda$. This is done by means of an adequate normalization of the integrating maps resulting from the surgery construction, which provides a precise control of its only parameter $\lambda(\theta)$ in terms of the fibered multiplier. We ultimately show that the fibered multiplier map $\widehat{\kappa}:\mathcal{H}_0^*(\TT^1)\to\DD^*$ associated to $\gamma\equiv 0$ is a holomorphic submersion in the sense of \pref{subsec:2_4_Banach}.

\begin{customproof}{of \pref{thm:B_QCapp}}
    \textup{Consider the family of QPF quadratic polynomials (with Diophantine frequency $\alpha$):
    \begin{equation*}
        Q_\lambda(\theta,z)=\left(\theta+\alpha, \ z^2+\lambda(\theta) z\right),
    \end{equation*}
    where $\lambda\in\mathcal{H}_0^*(\TT^1)$, i.e. $Q_\lambda$ is hyperbolic with $\gamma\equiv 0$ as its unique attracting invariant curve, of zero-index and $\Lambda(\gamma)=\int_{\TT^1} \log|\lambda(\theta)|<0$. The associated fibered multiplier map $\widehat{\kappa}:\mathcal{H}_0^*(\TT^1)\to\DD^*$, is given by
    \begin{equation*}
       \widehat{\kappa}(\lambda) = \exp{\widehat{\chi}(\lambda)},  \qquad \mbox{where} \qquad \widehat{\chi}(\lambda):= \int_{\TT^1} \mathrm{log} \hspace{0.3mm} \lambda(\theta) d\theta
    \end{equation*}
    is a mapping from $\mathcal{H}_0^*(\TT^1)$ to the left half-plane $\HH_\ell$, acting as the complex version of the Lyapunov exponent. 
    }

    \noindent\textup{First we notice that $\widehat{\kappa}$ is a continuous map. In fact, for any $\lambda\in \mathcal{H}_0^*(\TT^1)$ and any sequence $\{\lambda_n\}_{n\in\NN}$ in $\mathcal{H}_0^*(\TT^1)$ converging to $\lambda$, i.e. $||\lambda_n-\lambda||_\infty\longrightarrow  0$ as $n\to\infty$, we have that
    \begin{equation*}
        \big|\widehat{\chi}(\lambda_n) - \widehat{\chi}(\lambda) \big| = \left| \int_{\TT^1} \log \hspace{0.1mm}\lambda_n(\theta) \hspace{0.2mm} d\theta - \int_{\TT^1} \log\hspace{0.1mm}\lambda(\theta)\hspace{0.2mm} d\theta  \right|\longrightarrow 0, \qquad \mbox{as} \qquad n\to\infty,
    \end{equation*}
    by the Dominated Convergence Theorem (see e.g. \cite[\S6]{Mujica1986}), since $\log\circ\vspace{0.2mm}\lambda_n$ is continuous on $\TT^1$ for each $n$. 
    }
    
    \noindent\textup{Moreover, as $\widehat{\kappa}$ is clearly bounded on $ \mathcal{H}_0^*(\TT^1)$, we can see that $\widehat{\kappa}=\exp\circ\widehat{\chi}$ is holomorphic by showing that it is weakly holomorphic (see \pref{def:HolomorphicBanach} and \pref{thm:WeaklyHoloPower}), i.e. for any $\lambda\in \mathcal{H}_0^*(\TT^1)$ and $v\in \mathcal{C}(\TT^1)$,
    \begin{equation*}
        t \mapsto \widehat{\chi}(\lambda+ t v)=\int_{\TT^1} \log\big(\lambda(\theta)+t \hspace{0.2mm} v(\theta)\big) d\theta
    \end{equation*}
    is holomorphic (as a function of one complex variable) on some neighborhood $\Delta$ of the origin, small enough such that $\lambda+t v\in\mathcal{C}^*(\TT^1)$. This follows from Morera's Theorem, as for any simple closed curve $\Gamma$ in $\Delta$, $\int_\Gamma \widehat{\chi}(\lambda+ t v) dt = 0$ by changing the order of integration via Fubini's Theorem and using Cauchy's Theorem. 
    }

    \noindent\textup{Now we can prove that $\widehat{\kappa}$ is a holomorphic submersion on $\mathcal{H}_0^*(\TT^1)$ by means of \pref{thm:HoloSubmersions}, that is, for each $\lambda^*\in\mathcal{H}_0^*(\TT^1)$, we shall find a local holomorphic section for $\widehat{\kappa}$ at $\kappa^*:=\widehat{\kappa}(\lambda^*)\in\DD^*$. Indeed, the holomorphic right-inverse of $\widehat{\kappa}$ may be determined on an arbitrary simply-connected neighborhood $U_{}^*\subset\DD^*$ of $\kappa^*$, by applying \pref{thm:A_QCqpf} in a manner that delivers the new polynomial in the same QPF holomorphic family.
    }

    \noindent\textup{Recall that, for any $\kappa\in U_{}^*$, \pref{thm:A_QCqpf} (see surgery construction in \pref{sec:3_ProofThmA}) provides a fibered QC map $\Phi_\kappa=(\mathrm{Id},\phi_{\kappa,\theta^{}})$ which is continuous in $\theta$ and holomorphic in $z$, conjugating $Q_{\lambda_{}^*}$ to some QPF hyperbolic polynomial (of the same degree) with $\Phi_\kappa(\gamma)$ as an attracting invariant curve of fibered multiplier $\kappa$, and zero-index too. 
    }

    \noindent\textup{For our purpose, the appropriate normalization of the integrating maps is the following: each $\phi_{\kappa,\theta}$ is chosen to fix $0$ and $\infty$, but instead of fixing $1$ as usual, here it is required that
    \begin{equation*}
        \frac{\phi_{\kappa,\theta}(z)}{z} \longrightarrow 1, \qquad \mbox{as} \qquad |z|\to \infty,
    \end{equation*}
    i.e. $\phi_{\kappa,\theta}$ is taken to be tangent to the identity at $\infty$, for all $\theta\in\TT^1$. In this manner, setting $u=\phi_{\kappa,\theta}(z)$ and taking into account that $q_{\lambda_{}^*,\theta}:=\Pi_2\circ Q_{\lambda_{}^*}$ is a monic quadratic polynomial, we obtain that
    \begin{equation*}
        \frac{\phi_{\kappa,\theta+\alpha}\circ q_{\lambda_{}^*,\theta}^{} \circ \phi_{\kappa,\theta}^{-1}(u)}{u^2} = \frac{\phi_{\kappa,\theta+\alpha}\big(q_{\lambda_{}^*,\theta}^{}(z)\big)}{q_{\lambda_{}^*,\theta}^{}(z)} \hspace{0.3mm} \frac{q_{\lambda_{}^*,\theta}^{}(z)}{z^2} \hspace{0.3mm} \left(\frac{z}{\phi_{\kappa,\theta}(z)}\right)^2 \longrightarrow 1, \qquad \mbox{as} \qquad |u|\to \infty.
    \end{equation*}
    In other words, the QPF hyperbolic polynomial $\Phi_\kappa\circ Q_{\lambda_{}^*} \circ \Phi_\kappa^{-1}$ is also monic, with the zero-section $\TT^1\times\{0\}$ as the corresponding attracting invariant curve. Hence it belongs to the same quadratic family, i.e. it is of the form $Q_{\lambda_\kappa}=(\mathcal{R}_\alpha,q_{\lambda_\kappa,\theta}^{})$ for some $\lambda_{\kappa}\in\mathcal{H}_0^*(\TT^1)$, where $q_{\lambda_\kappa,\theta}^{}(z)=z^2+\lambda_\kappa(\theta) z$, so that the following diagram commutes:
    }
    \begin{figure}[H]
    \centering
    \begin{tikzcd}[column sep=30pt,row sep=30pt]
    \CC \arrow[r,"q_{\lambda_{}^*,\theta}^{}"] \arrow[d,"\phi_{\kappa,\theta}^{}"'] &
    \CC \arrow[d,"\phi_{\kappa,\theta+\alpha}^{}"]
    \\
    \CC \arrow[r,"q_{\lambda_\kappa,\theta}^{}"'] &
    \CC
    \end{tikzcd}      
    \end{figure}
    
    \noindent\textup{Furthermore, as $\Phi_{\kappa}$ is a fibered (orientation-preserving) homeomorphism, it sends the critical curve of $Q_{\lambda_{}^*}$ to that of $Q_{\lambda_\kappa}$, i.e. $\phi_{\kappa,\theta}\big(-\lambda_{}^*(\theta)/2\big) = -\lambda_\kappa(\theta)/2$, and so the new parameter is uniquely determined by
    \begin{equation}
        \label{eq:B_parameters}
        \lambda_\kappa(\theta) = -2 \hspace{0.3mm}\phi_{\kappa,\theta} \big(\shortminus\lambda_{}^*(\theta)/2\big). 
    \end{equation}
    Therefore, the mapping $\widehat{\lambda}:U_{}^*\to\mathcal{H}_0^*(\TT^1)$, given by $\widehat{\lambda}(\kappa):=\lambda_\kappa$ as above, is a local right-inverse to the fibered multiplier map $\widehat{\kappa}$, i.e. $\widehat{\kappa}\circ\widehat{\lambda}=\mathrm{Id}_{U_{}^*}$, with $\widehat{\lambda}(\kappa^*)=\lambda_{}^*$. Since $\phi_{\kappa,\theta}$ is analytic in $\kappa$, it follows from relation \pref{eq:B_parameters} that $\widehat{\lambda}$ is holomorphic on $U_{}^*$. This concludes the proof of \pref{thm:B_QCapp}.
    $\hfill\square$ 
    }
\end{customproof}

%
\printbibliography
%
\end{document}